\documentclass{birkjour}
\makeatletter
\RenewDocumentCommand\ref{s}{%
	\IfBooleanTF{#1}{\@kernel@sref}{\@kernel@ref}%
}
\RenewDocumentCommand\pageref{s}{%
	\IfBooleanTF{#1}{\@kernel@spageref}{\@kernel@pageref}%
}
\makeatother

\usepackage{amssymb}
\usepackage{amsmath}

\usepackage{enumitem}

\newtheorem{thm}{Theorem}[section]

\newtheorem{lem}{Lemma}[section]
\newtheorem{prop}{Proposition}[section]
\theoremstyle{definition}
\newtheorem{defn}{Definition}[section]
\theoremstyle{remark}
\newtheorem{rem}{Remark}
\newtheorem{exam}{Example}[section]
\numberwithin{equation}{section}

\usepackage[backend=bibtex,style=numeric]{biblatex}
\begin{document}
	
	\title[Spectral theorem for compact self-adjoint linear relations]
	{Spectral theorem for compact self-adjoint linear relations with an application to elliptic boundary value problems}
	
	\author[MATHEW THOMAS] {MATHEW THOMAS$^*$}
	\address{Research Scholar\\
		Department of Mathematics\\
		Cochin University of Science and Technology\\
		Kochi, Kerala, India.\\
		https://orcid.org/0000-0001-9708-763X}
	\email{mathewmylettu@cusat.ac.in}
	
	\thanks{$^*$Corresponding author\\
		Email address: mathewmylettu@cusat.ac.in (\textsc{MATHEW THOMAS})}
	
	\author[V. B. KIRAN KUMAR]{V. B. KIRAN KUMAR}
	\address{Assistant Professor\\
		Department of Mathematics\\
		Cochin University of Science and Technology\\
		Kochi, Kerala, India.\\
		https://orcid.org/0000-0001-7643-4436}
	\email{vbk@cusat.ac.in}
	
	\subjclass{47A06, 47A10, 47B07, 35J25, 35P10}
	
	\keywords{Linear relation, spectral theorem, Arens decomposition, finite-rank approximation, Fredholm alternative, boundary triple, Neumann Laplacian}
	
	\date{\today}
	
	\begin{abstract}
		We establish a spectral theorem for compact self-adjoint linear relations on a Hilbert space. The proof relies on the Arens decomposition of a closed linear relation into an operator part and a purely multivalued part, reducing the problem directly to the classical spectral theorem for compact self-adjoint operators. No finite-dimensionality condition is imposed on the multivalued part. We also prove a finite-rank approximation theorem for compact bounded linear relations and a Fredholm alternative for compact self-adjoint linear relations. As an application, we employ the theory of boundary triples to show that the inverse of the Neumann Laplacian on a bounded domain is a compact self-adjoint linear relation. The abstract spectral theorem then recovers the classical eigenfunction expansion of solutions to the Neumann problem: an $L^2$-convergent series in terms of the non-constant Neumann eigenfunctions together with an arbitrary additive constant. The framework extends naturally to general elliptic boundary value problems with finite-dimensional kernel.
	\end{abstract}
	
	\maketitle
	
	\section{Introduction}
	
	The study of multivalued linear operators, or linear relations, is a dynamic and expanding field within functional analysis. Extending classical operator theory to linear relations has attracted significant interest since the foundational work of J.~von~Neumann \cite{0d,0e}, who introduced linear relations in 1932 while examining adjoints of non-densely defined differential operators. When an operator is not densely defined, its adjoint is naturally multivalued; this is the primary motivation for studying linear relations.
	
	In 1961, Arens \cite{21} introduced the adjoint of a linear relation without requiring dense domain and, crucially, decomposed any closed Hermitian linear relation into an orthogonal sum of an operator part and a purely multivalued part. This decomposition, which we recall in Proposition~\ref{arens} below, provides the essential bridge between linear relations and classical operators, allowing one to transfer many operator-theoretic results to the multivalued setting.
	
	The spectral theorem for compact self-adjoint operators is a cornerstone of functional analysis, with applications ranging from integral equations to Sturm--Liouville problems. In this article, we extend this theorem to compact self-adjoint linear relations on a Hilbert space. A key feature of our result is that it holds without any restriction on the dimension of the multivalued part $T(0)$. The proof works directly with the operator part $T_s$ via the Arens decomposition of a closed linear relation, thereby reducing the problem to the classical spectral theorem for compact self-adjoint operators.
	
	\subsection*{Relation to existing work}
	
	The object studied in this paper a self-adjoint linear relation on a Hilbert space also appears in the literature under the name \emph{self-adjoint subspace}. Shi, Shao, and Ren \cite{4} initiated a systematic study of self-adjoint subspaces, including their resolvent convergence and spectral approximation properties~\cite{3}, and the stability of essential spectra under compact perturbations~\cite{32}. A type of spectral approximation for continuous self-adjoint linear relations was studied in \cite{thomas2026spectral}. That framework works with essentially the same Arens-type decomposition into an operator part and a purely multivalued part on separable Hilbert spaces. While the compact case of the self-adjoint subspace program should be expected to yield an eigenvector expansion in the spirit of Theorem~\ref{thm:spectral}, the explicit orthonormal eigenbasis, the convergent series representation with the additive $T(0)$-term, and crucially the fact that the theorem holds without any finite-dimensionality condition on $T(0)$ do not appear to have been stated in that body of work. The present paper makes these elements explicit and provides a self-contained proof using only the Arens decomposition and the classical spectral theorem for compact self-adjoint operators.
	
	On the Banach space side, Acharya and McBride~\cite{AcharyaMcBride2019} proved that for a compact linear relation $T$ on a Banach space $X$, $\sigma(T) = \sigma_p(T) \cup \{0\}$, using the quotient operator on $X/T(0)$ rather than an orthogonal decomposition. Their result is more general in that it does not require a Hilbert space structure or self-adjointness, but it is weaker in conclusion: it provides neither an orthonormal eigenbasis nor an explicit series representation. The present paper exploits the Hilbert space structure and self-adjointness to make the Arens splitting orthogonal, which yields the stronger expansion.
	
	In view of these comparisons, the primary contributions of this paper are: (i) the explicit spectral theorem with orthonormal eigenbasis and convergent series representation, obtained without any dimension restriction on $T(0)$; (ii) the finite-rank approximation theorem (Theorem \ref{thm:approx}); (iii) the Fredholm alternative for compact self-adjoint linear relations (Theorem \ref{thm:fredholm}); and (iv) the application to the Neumann Laplacian via boundary triples (Section~4).
	
	The second half of the article is devoted to an application of the abstract theory to elliptic boundary value problems. A classical difficulty in the spectral theory of the Neumann Laplacian is that the constant functions form a one-dimensional kernel, so the inverse is not a single-valued operator. Within the framework of boundary triples developed by Derkach and Malamud~\cite{DerkachMalamud}, the inverse of the Neumann Laplacian is naturally identified as a compact self-adjoint linear relation with one-dimensional multivalued part. Applying our spectral theorem to this relation yields a rigorous eigenfunction expansion for solutions of the Neumann problem, recovering the well-known series in terms of the non-constant eigenfunctions plus an arbitrary additive constant.
	
	The article is organized as follows. Section~2 recalls the necessary preliminaries on linear relations, including the Arens decomposition, selections, quotient norms, and spectral definitions. Section~3 contains the main abstract results: finite-rank approximation (Section~3.1)
    and  the Fredholm alternative (Section~3.2). Section~3.3 presents the spectral theorem for compact self-adjoint linear relations.  Section~4 is devoted to the application: we review boundary triples (Section~4.1), construct the Neumann Laplacian and its inverse as a linear relation (Section~4.2), and apply the spectral theorem to obtain the eigenfunction expansion (Section~4.3). Section~5 contains concluding remarks and directions for future work.
	
	\section{Preliminaries on linear relations}
	
	Throughout, $H$ denotes a complex Hilbert space with inner product $\langle\cdot,\cdot\rangle$, which is linear in the first argument. The product space $H \times H$ is endowed with the inner product
	\[
	\langle(x_1,y_1),(x_2,y_2)\rangle = \langle x_1,x_2\rangle + \langle y_1,y_2\rangle,
	\]
	making it a Hilbert space.
	
	\subsection{Basic definitions and operations}
	
	\begin{defn}\label{def:LR}
		A \emph{linear relation} $T$ in $H$ is a linear subspace of $H \times H$. We denote by $LR(H)$ the set of all linear relations on $H$. The domain, range, and multivalued part of $T$ are defined respectively by:
		\begin{align*}
			D(T) &= \{x \in H \mid (x,y) \in T \text{ for some } y \in H\},\\
			R(T) &= \{y \in H \mid (x,y) \in T \text{ for some } x \in H\},\\
			T(0) &= \{y \in H \mid (0,y) \in T\}.
		\end{align*}
		For $x \in D(T)$, we write $T(x) = \{y \in H \mid (x,y) \in T\}$. The graph of $T$ is $G(T) = T$ itself. $T$ is \emph{closed} if $G(T)$ is closed in $H \times H$.
	\end{defn}
	
	A single-valued linear relation is identified with a linear operator in the usual way: if $T(0) = \{0\}$, then $T$ is a linear operator. In this paper, we refer to single-valued linear relations simply as linear operators.
	
	For $T, S \in LR(H)$ and $\alpha \in \mathbb{C}$, the algebraic operations are defined as follows:
	\begin{align*}
		\alpha T &= \{(x, \alpha y) \mid (x,y) \in T\},\\
		T + S &= \{(x, y+z) \mid (x,y) \in T,\; (x,z) \in S\},\\
		T - \alpha I &= \{(x, y - \alpha x) \mid (x,y) \in T\}.
	\end{align*}
	The inverse of $T$ is $T^{-1} = \{(y,x) \mid (x,y) \in T\}$, and the product is
	\[
	ST = \{(x,z) \in H \times H \mid \exists\, y \in H \text{ with } (x,y) \in T,\; (y,z) \in S\}.
	\]
	
	The \emph{adjoint} of $T$ is defined by
	\[
	T^* = \{(f,g) \in H \times H \mid \langle g,x\rangle = \langle f,y\rangle \text{ for all } (x,y) \in T\}.
	\]
	$T^*$ is always a closed linear relation. $T$ is \emph{Hermitian} (or symmetric) if $T \subset T^*$ and \emph{self-adjoint} if $T = T^*$.
	
	\subsection{Compactness and quotient norms}
	
	Let $T \in LR(H)$. The closure $\overline{T(0)}$ is a closed subspace of $H$. Let $Q: H \to H/\overline{T(0)}$ denote the natural quotient map. For each $x \in D(T)$, the set $T(x)$ is an affine subspace parallel to $T(0)$, so its image under $Q$ is a single element of $H/\overline{T(0)}$. Thus the composition
	\[
	QT: D(T) \to H/\overline{T(0)}, \qquad (QT)(x) = Q(T(x))
	\]
	is a single-valued linear operator.
	
	\begin{defn}[\cite{11}]\label{def:compact}
		A linear relation $T \in LR(H)$ is \emph{compact} if $QT$ is a compact operator. That is, for every bounded sequence $(x_n)$ in $D(T)$, the sequence $(QT(x_n))$ has a convergent subsequence in $H/\overline{T(0)}$.
	\end{defn}
	
	\begin{defn}\label{def:norm}
		For $x \in D(T)$, the norm of $T(x)$ is
		\[
		\|T(x)\| = \|(QT)(x)\|_{H/\overline{T(0)}} = \inf_{y \in T(x)} \|y\|.
		\]
		The norm of $T$ is $\|T\| = \sup\{\|T(x)\| : x \in D(T),\, \|x\| \le 1\}$. A linear relation $T$ is \emph{continuous} if $\|T\| < \infty$ and \emph{bounded} if it is continuous with $D(T) = H$.
	\end{defn}
	We can deduce the following Proposition from Proposition 3.1 \cite{11}.
	\begin{prop}\label{prop:compact-seq}
		A linear relation $T \in LR(H)$ is compact if and only if for every bounded sequence $(x_n)$ in $D(T)$, there exist a subsequence $(x_{n_k})$ and a sequence $(y_{n_k})$ with $y_{n_k} \in T(x_{n_k})$ such that $(y_{n_k})$ converges in $H$.
	\end{prop}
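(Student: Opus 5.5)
We prove the two implications separately, working directly from Definition~\ref{def:compact} and the quotient norm $\|Q y\| = \inf_{z\in\overline{T(0)}}\|y-z\| = \operatorname{dist}(y,\overline{T(0)})$ on $H/\overline{T(0)}$. Throughout we use that $QT(x_n)=Q(y)$ for any choice $y\in T(x_n)$, since $T(x_n)$ is a coset of $T(0)$ and $Q$ kills $\overline{T(0)}$.

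\emph{Sufficiency} is immediate. Let $(x_n)$ be bounded in $D(T)$, and take the subsequence $(x_{n_k})$ and elements $y_{n_k}\in T(x_{n_k})$ with $y_{n_k}\to y$ in $H$. Because $Q$ is a contraction,
\[
\|QT(x_{n_k})-Qy\| = \|Q(y_{n_k}-y)\| \le \|y_{n_k}-y\| \to 0 .
\]
Thus $(QT(x_{n_k}))$ converges in $H/\overline{T(0)}$, and $T$ is compact.

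\emph{Necessity} is the main step. Let $T$ be compact and $(x_n)$ bounded in $D(T)$. By compactness there is a subsequence with $QT(x_{n_k})\to \xi$ in $H/\overline{T(0)}$. Pick any $y\in H$ with $Qy=\xi$. We must now lift this quotient convergence to genuine convergence in $H$ by choosing suitable representatives. Fix arbitrary $w_k\in T(x_{n_k})$. Since $\operatorname{dist}(w_k-y,\overline{T(0)})\to 0$, we can choose $z_k\in\overline{T(0)}$ with $\|w_k-y-z_k\|\le \operatorname{dist}(w_k-y,\overline{T(0)})+1/k$. The candidate representatives are then $w_k-z_k$, which converge to $y$.

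The obstacle is that $w_k-z_k$ lies in $T(x_{n_k})$ only if $z_k\in T(0)$, whereas we only know $z_k\in\overline{T(0)}$. We handle this by density: choose $z_k'\in T(0)$ with $\|z_k-z_k'\|<1/k$ and set $y_{n_k}=w_k-z_k'$. Then $y_{n_k}\in T(x_{n_k})$, because $T(x_{n_k})=w_k+T(0)$, and
\[
\|y_{n_k}-y\| \le \|w_k-z_k-y\| + \|z_k-z_k'\| \to 0 .
\]
Hence $(y_{n_k})$ converges in $H$, which proves necessity. If $T$ is closed, $T(0)$ is already closed and this density step is unnecessary. The argument also recovers the statement derived from Proposition~3.1 of \cite{11}.
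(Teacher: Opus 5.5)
Your proof is correct. The paper gives no argument for this proposition; it only says it can be deduced from Proposition~3.1 of \cite{11}. Your proposal is therefore a self-contained replacement for that citation, working directly from Definition~\ref{def:compact}.

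Sufficiency is the routine observation that $QT(x_{n_k})=Qy_{n_k}$ and that $Q$ is a contraction. The real content is in necessity: you lift convergence in $H/\overline{T(0)}$ to convergence of actual representatives. This works because each $T(x_{n_k})$ is the full coset $w_k+T(0)$, so you may shift any chosen $w_k$ by an arbitrary element of $T(0)$.

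The citation buys brevity. Your argument makes visible that no closedness of $T$ or of $T(0)$ is needed, which fits the statement being made for arbitrary $T\in LR(H)$.

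One small streamlining is available. For every $v\in H$ we have $\operatorname{dist}(v,\overline{T(0)})=\operatorname{dist}(v,T(0))$, since the distance to a set equals the distance to its closure. So you can choose $z_k$ directly in $T(0)$ with $\|w_k-y-z_k\|\le \operatorname{dist}(w_k-y,T(0))+1/k$. This removes the separate density step, even when $T(0)$ is not closed.
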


\begin{defn}
Let $T \in LR(X)$ and $\lambda \in \mathbb{C}$.  
\begin{enumerate}
    \item The linear relation, $(T - \lambda I)^{-1},$ is called the resolvent of $T$ at $\lambda$.
    \item The resolvent set of $T$ is defined by,
    \[
    \rho(T) = \left\{ \lambda \in \mathbb{C} \ \middle| \ (T - \lambda I)^{-1} 
    \ \text{is a bounded linear operator on } X \right\}.
    \]
  The spectrum, $\sigma(T)$ of $T$ is the complement of the resolvent set $\rho(T)$ in $\mathbb{C}$, and $\sigma(T)$ is always a closed subset of $\mathbb{C}$.

    \item The point spectrum of $T$ is,
    \[
    \sigma_p(T) = \left\{ \lambda \in \mathbb{C} \ \middle| \ (x, \lambda x) \in T \ 
    \text{for some} \ x \in X \ \text{with} \ \|x\| = 1 \right\}.
    \]
   Each \(\lambda \in \sigma_p(T)\) is called an eigenvalue of  \(T\), and the corresponding $x$ is an eigenvector.
    \item The essential spectrum, $\sigma_e(T)$, consists of those points of $\sigma(T)$ that are either
    accumulation points of $\sigma(T)$ or isolated eigenvalues of infinite multiplicity.
\end{enumerate}
\end{defn}

	\subsection{The Arens decomposition}

   We prove Arens decomposition for the closed Hermition case for a linear relation. This is the same obtained in \cite{21}. However, we present the whole proof for the sake of computation. 

	\begin{prop}[Arens decomposition]\label{arens}
		Let $T \in LR(H)$ be closed and Hermitian. Then $T$ admits the orthogonal direct sum decomposition
		\[
		T = T_s \oplus T_\infty,
		\]
		where
		\begin{align*}
			T_\infty &= T \cap (\{0\} \times H) = \{0\} \times T(0),\\
			T_s &= T \cap \bigl(T(0)^\perp \times T(0)^\perp\bigr).
		\end{align*}
		Here $T_s$ is a single-valued linear operator with
		\[
		D(T_s) = D(T) \cap T(0)^\perp, \qquad R(T_s) \subset T(0)^\perp,
		\]
		and $T_\infty$ is the purely multivalued part. The decomposition is orthogonal in $H \times H,$ that is, for any $(x,y) \in T_s$ and $(0,z) \in T_\infty$, we have ${\langle (x,y), (0, z) \rangle}_{H \times H} = 0$.
	\end{prop}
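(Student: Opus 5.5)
The key structural fact to establish first is that for a closed Hermitian $T$, the multivalued part is orthogonal to the domain: $T(0) \perp D(T)$. To see this, take $(x,y)\in T$ and $(0,z)\in T$. Since $T\subset T^*$, the pair $(0,z)$ lies in $T^*$, so by the definition of the adjoint $\langle z,x\rangle = \langle 0,y\rangle = 0$. Hence $D(T)\subset T(0)^\perp$.

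I will also note that $T(0)$ is closed, because $\{0\}\times T(0) = T\cap(\{0\}\times H)$ is an intersection of closed sets. This lets me use the orthogonal projection $P$ onto $T(0)^\perp$. By the previous paragraph, $D(T)\cap T(0)^\perp = D(T)$.

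Next I decompose an arbitrary element $(x,y)\in T$. Write $y = Py + (I-P)y$, where $(I-P)y \in T(0)$. Then $(0,(I-P)y)\in T$, and by linearity $(x,Py) = (x,y)-(0,(I-P)y)\in T$. Since $x\in D(T)\subset T(0)^\perp$ and $Py\in T(0)^\perp$, we get $(x,Py)\in T_s$ and $(0,(I-P)y)\in T_\infty$. This shows $T = T_s + T_\infty$. The inclusion $T_s + T_\infty\subset T$ is immediate from linearity, since both pieces are subsets of $T$.

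The remaining points are then short:
\begin{itemize}
\item \emph{$T_s$ is single-valued.} If $(0,y)\in T_s$, then $y\in T(0)\cap T(0)^\perp=\{0\}$.
\item \emph{Domain and range.} From the decomposition, $D(T_s)=D(T) = D(T)\cap T(0)^\perp$, and $R(T_s)\subset T(0)^\perp$ holds by the definition of $T_s$.
\item \emph{Orthogonality.} For $(x,y)\in T_s$ and $(0,z)\in T_\infty$, we have $\langle (x,y),(0,z)\rangle_{H\times H} = \langle x,0\rangle + \langle y,z\rangle = 0$, because $y\in T(0)^\perp$ and $z\in T(0)$.
\end{itemize}
Orthogonality forces $T_s\cap T_\infty=\{0\}$, so the sum is an orthogonal direct sum.

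The main obstacle is the inclusion $D(T)\subset T(0)^\perp$, which is where the Hermitian hypothesis is used. Closedness enters only to ensure that $T(0)$ is closed, so that the projection $P$ exists and the splitting $y = Py + (I-P)y$ lands in $T(0)$ itself rather than in its closure.
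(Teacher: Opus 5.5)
Your proof is correct and follows essentially the same route as the paper: you use closedness to make $T(0)$ closed, split $y$ along $T(0)^\perp\oplus T(0)$, and use $(0,z)\in T\subset T^*$ to get $\langle z,x\rangle=\langle 0,y\rangle=0$, i.e.\ $x\perp T(0)$. The only difference is organizational: you isolate $D(T)\subset T(0)^\perp$ as a preliminary step, so $D(T_s)=D(T)$ falls out immediately, whereas the paper derives the same fact inside the decomposition step and then checks $D(T_s)=D(T)\cap T(0)^\perp$ by two inclusions.
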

	
	\begin{proof}
		For a Hermitian relation $T \subset T^*$, we have $T(0) \subset T^*(0)$. To see this, let $x\in T(0)$, then $(x,0)\in T$, and hence $(x,0)\in T^*$, which gives $x\in T^*(0)$.
		This inclusion is essential for the orthogonality of the decomposition.

		Define $T_\infty = \{0\} \times T(0)$ and $T_s = T \cap (T(0)^\perp \times T(0)^\perp)$.

		\noindent If $(x,y) \in T_s$ and $(0,z) \in T_\infty$, then $y \in T(0)^\perp$ and $z \in T(0)$, so\\ ${\langle (x,y), (0, z) \rangle}_{H \times H} = 0$. Hence we achieve the orthogonality mentioned in the statement.

		For the decomposition of $T$, consider $(x,y) \in T$. Decompose $y = y_1 + y_2$ with $y_1 \in T(0)^\perp$ and $y_2 \in T(0)$ (closedness of $T(0)$ follows from closedness of $T$). Since $y_2 \in T(0)$, we have $(0, y_2) \in T$. Therefore $(x,y_1)=(x,y)-(0,y_2)\in T$. Moreover, $y_1\in T(0)^\perp$, we have $(x,y_1)\in T\cap(H\times T(0)^\perp)$.\\
        To conclude $(x, y_1) \in T_s$, we need $x \in T(0)^\perp$. For that, let $z \in T(0),$ then $z \in T^*(0)$, that is, $(0, z) \in T^*$   ; since $T$ is Hermitian, for any $(x, y_1) \in T$ , we have $\langle y_1, 0\rangle = \langle x, z\rangle$, that is $ \langle x, z\rangle=0$. This means $x \perp T(0)$, that is, $x \in T(0)^\perp$. Thus $(x, y_1) \in T_s$, and the decomposition $T = T_s \oplus T_\infty$ is proved.
		
		Finally, we show that  $T_s$ is single-valued. If $(x, y_1), (x, y_1') \in T_s$, then $(0, y_1 - y_1') \in T$, so $y_1 - y_1' \in T(0)$. But $y_1, y_1' \in T(0)^\perp$, so $y_1 - y_1' \in T(0) \cap T(0)^\perp = \{0\}$. Thus $y_1 = y_1'$, proving $T_s$ is single-valued.

Also, we have
$
D(T_s)=D(T)\cap T(0)^\perp.
$
To prove this, let $x\in D(T_s)$. Then there exists $y_1\in T(0)^\perp$ such that $(x,y_1)\in T_s$. By the definition of $T_s$, we have $(x,y_1)\in T$, and hence $x\in D(T)$. Also, $x\in T(0)^\perp$. Thus
$
x\in D(T)\cap T(0)^\perp,
$
and consequently
$
D(T_s)\subseteq D(T)\cap T(0)^\perp.
$ Conversely, let $x\in D(T)\cap T(0)^\perp$. Then there exists $y\in H$ such that $(x,y)\in T$. Write
$
y=y_1+y_2,\qquad y_1\in T(0)^\perp,\quad y_2\in T(0).
$
As shown above, $(x,y_1)\in T_s$. Hence $x\in D(T_s)$, and therefore
$
D(T)\cap T(0)^\perp\subseteq D(T_s).
$
Thus,
$
D(T_s)=D(T)\cap T(0)^\perp.
$\\
Furthermore, $R(T_s) \subset T(0)^\perp$ follows directly from the definition of $T_s$.

	\end{proof}

	\begin{rem}\label{rem:arens-general}

        From \cite{21}, we get a general decomposition for closed relations on Hilbert spaces. However the domain $D(T_s)=D(T) \cap T(0)^\perp$ in our presentation above. This will be used later.
	\end{rem}
	\begin{rem}\label{rem:key-observation}
		For a self-adjoint linear relation $T$, we have
		\[
		T(0) = D(T)^\perp,
		\]
		and hence
		\[
		\overline{D(T)} = T(0)^\perp.
		\]
		In particular,
		\[
		D(T) \subset T(0)^\perp
		\quad\text{and}\quad
		D(T_s) = D(T).
		\]
		Moreover,
		\[
		D(T) \cap T(0) = \{0\}.
		\]
	\end{rem}
	\begin{lem}[{\cite{1}, Lemma 2.7, Corollary 3.2}]\label{lem:Ts-norm}
		Let $T \in LR(H)$ be closed and Hermitian.
		\begin{enumerate}[label=(\roman*)]
			\item If $T$ is bounded, then $\|T\| = \|T_s\|$.\label{item:norm}
			\item For any $n \ge 1$, $(T^n)_\infty = T_\infty$ and $T^n = T_s^n \oplus T_\infty$, where $T_s^n = (T_s)^n$.\label{item:power}
		\end{enumerate}
	\end{lem}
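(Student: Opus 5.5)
The plan is to prove both parts directly from the Arens decomposition $T = T_s \oplus T_\infty$ of Proposition~\ref{arens}. Since $T$ is closed, $T(0)$ is closed and equals $\overline{T(0)}$. Write $P$ for the orthogonal projection onto $T(0)^\perp$.

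\emph{Part \ref{item:norm}.} Take $x \in D(T) = H$ with $\|x\|\le 1$. The first task is to locate $x$ relative to $T(0)$.
\begin{itemize}
\item Hermitian-ness gives $D(T) \perp T(0)$, as in the proof of Proposition~\ref{arens}. Since $D(T)=H$, this forces $T(0)=\{0\}$, so $T=T_s$ and the claim is trivial.
\item To be safe in the general case, without using that shortcut, compute the norm directly. Every $y \in T(x)$ has the form $y = T_s x + z$ with $z \in T(0)$ and $T_s x \perp T(0)$.
\item Hence $\|y\|^2 = \|T_s x\|^2 + \|z\|^2$, and the infimum over $z \in T(0)$ is attained at $z=0$.
\item This gives $\|T(x)\| = \|T_s x\|$. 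Taking the supremum over $\|x\|\le 1$ yields $\|T\| = \|T_s\|$.
\end{itemize}

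\emph{Part \ref{item:power}.} We argue by induction on $n$; the case $n=1$ is Proposition~\ref{arens}. The key observation is that, since $D(T)\subset T(0)^\perp$ (shown in the proof of Proposition~\ref{arens}), we have $T(z)=T(0)$ whenever $z\in T(0)\cap D(T) = \{0\}$.

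\emph{Inclusion $T^{n+1}\subset T_s^{n+1}\oplus T_\infty$.} Take $(x,w)\in T^{n+1}$. Choose $y$ with $(x,y)\in T^n = T_s^n\oplus T_\infty$ and $(y,w)\in T$. Then:
\begin{itemize}
\item $y = T_s^n x + z$ with $z\in T(0)$.
\item Since $y \in D(T)\subset T(0)^\perp$ and $T_s^n x\in T(0)^\perp$, we get $z\in T(0)\cap T(0)^\perp$, so $z=0$.
\item Therefore $y = T_s^n x$ and $w \in T_s y + T(0)$, which gives $(x,w)\in T_s^{n+1}\oplus T_\infty$.
\end{itemize}

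\emph{Reverse inclusion.} Take $(x, T_s^{n+1}x + z)$ with $z\in T(0)$. Then:
\begin{itemize}
\item $(x,T_s^n x)\in T^n$ and $(T_s^n x, T_s^{n+1}x + z)\in T$, so the pair lies in $T^{n+1}$.
\item For pairs of the form $(0,z)$ with $z \in T(0)$, use $(0,0)\in T^n$ and $(0,z)\in T$.
\end{itemize}

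\emph{Multivalued part.} Finally, $(T^n)(0) = T(0)$ follows by reading off the decomposition at $x=0$, because $T_s^n 0 = 0$. Hence $(T^n)_\infty = T_\infty$.

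\emph{Main obstacle.} The delicate step is showing that the intermediate point $y$ carries no $T(0)$-component. This rests entirely on $D(T)\perp T(0)$, which Hermitian-ness supplies. Some care is also needed to confirm that $T_s^n$ is a well-defined operator with $R(T_s^n)\subset T(0)^\perp$, so that the sum $T_s^n \oplus T_\infty$ remains orthogonal.
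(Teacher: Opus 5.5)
Your proof is correct. The paper gives no argument for this lemma and only cites the source, so your proposal is a genuinely different, self-contained route rather than a variant of an existing proof.

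The single ingredient you use, $D(T)\subset T(0)^\perp$, comes from $T(0)\subset T^*(0)=D(T)^\perp$ for Hermitian $T$, and it carries both parts:
\begin{itemize}
\item In (ii), it forces the $T(0)$-component $z$ of the intermediate point $y\in D(T)$ to vanish, which is exactly what makes the induction close. It also gives $y\in D(T_s)$, so $x\in D(T_s^{n+1})$.
\item In (i), it gives $D(T_s)=D(T)$. Hence $T(x)=T_sx+T(0)$ for every $x\in D(T)$, and Pythagoras yields $\|T(x)\|=\|T_sx\|$.
\end{itemize}
It would help to state $D(T_s)=D(T)$ explicitly in (i), since it is what makes $T_sx$ defined for all $x\in D(T)$.

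Your first bullet in (i) is a real observation worth keeping. With the paper's definition of ``bounded'' ($D(T)=H$), a bounded Hermitian relation is automatically single-valued, so (i) is degenerate as stated. Your Pythagorean computation proves the substantive version for merely continuous Hermitian $T$ with $D(T)\neq H$, which is presumably what the cited source intends.

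Two cosmetic points:
\begin{itemize}
\item The remark that $T(z)=T(0)$ for $z\in T(0)\cap D(T)=\{0\}$ is vacuous and never used.
\item The separate treatment of pairs $(0,z)$ in the reverse inclusion is already the case $x=0$ of the preceding bullet.
\end{itemize}
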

	
	\begin{thm}[{\cite{3}, Lemma 2.2, Lemma 2.3}]\label{thm:spectrum-decomp}
		Let $T \in LR(H)$ be closed and Hermitian. Then:
		\begin{enumerate}[label=(\roman*)]
			\item $T_\infty = T \cap T(0)^2$, \ $T_s = T \cap (T(0)^\perp)^2$, and $T_s$ is closed and Hermitian in $(T(0)^\perp)^2$, while $T_\infty$ is closed and Hermitian in $T(0)^2$.
			\item $\rho(T) = \rho(T_s)$, $\sigma(T) = \sigma(T_s)$, $\sigma_p(T) = \sigma_p(T_s)$, $\sigma_e(T) = \sigma_e(T_s)$.\label{item:spectrum-eq}
			\item If $T$ is self-adjoint, then $T_s$ is a self-adjoint operator in the Hilbert space $T(0)^\perp$, and $T_\infty$ is self-adjoint in $T(0)$. Consequently, $\sigma(T) \subset \mathbb{R}$.
		\end{enumerate}
	\end{thm}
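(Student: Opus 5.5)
The plan is to derive everything from the orthogonal splitting $T = T_s \oplus T_\infty$ of Proposition~\ref{arens}. First I record two facts proved there: $T(0)$ is closed because $T$ is closed, and $D(T) \subset T(0)^\perp$, since for $(x,y)\in T$ and $z\in T(0)\subset T^*(0)$ we get $\langle x,z\rangle = \langle y,0\rangle = 0$.

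\textbf{Part (i).} Take $(x,y)\in T\cap T(0)^2$. Then $x\in T(0)\cap T(0)^\perp=\{0\}$, so $T\cap T(0)^2=\{0\}\times T(0)=T_\infty$. The description of $T_s$ is its definition.
\begin{itemize}
\item $T_s$ and $T_\infty$ are closed, being intersections of the closed set $T$ with closed subspaces.
\item $T_s$ is Hermitian in $(T(0)^\perp)^2$ because its elements lie in $T\subset T^*$, so the adjoint identity holds on $T_s$.
\item $T_\infty$ is Hermitian because $\langle z,0\rangle = 0 = \langle 0,z'\rangle$ for all $z,z'\in T(0)$.
\end{itemize}

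\textbf{Part (ii).} Fix $\lambda\in\mathbb{C}$ and compute $T-\lambda I$ through the splitting. For $(x,y)\in T$ write $y=y_1+y_2$ with $(x,y_1)\in T_s$ and $y_2\in T(0)$. Since $x\in T(0)^\perp$, we get $y-\lambda x=(y_1-\lambda x)+y_2$ with $y_1-\lambda x\in T(0)^\perp$. This gives three consequences.
\begin{itemize}
\item $R(T-\lambda I)=R(T_s-\lambda I)\oplus T(0)$.
\item $\ker(T-\lambda I)=\ker(T_s-\lambda I)$: if $y=\lambda x$, then orthogonality forces $y_2=0$ and $y_1=\lambda x$, so $(x,\lambda x)\in T_s$.
\item $(T-\lambda I)^{-1}=(T_s-\lambda I)^{-1}\oplus\bigl(T(0)\times\{0\}\bigr)$, where the second summand is the zero operator on $T(0)$, because $(T_\infty-\lambda I)^{-1}=\{(z,0): z\in T(0)\}$.
\end{itemize}
The last identity shows that $(T-\lambda I)^{-1}$ is an everywhere defined bounded operator on $H$ if and only if $(T_s-\lambda I)^{-1}$ is one on $T(0)^\perp$. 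Indeed, single-valuedness of either side is equivalent to triviality of the common kernel, and the norms of the two resolvents agree. Hence $\rho(T)=\rho(T_s)$ and $\sigma(T)=\sigma(T_s)$. The kernel identity gives $\sigma_p(T)=\sigma_p(T_s)$ with equal multiplicities. Since $\sigma_e$ is defined purely in terms of accumulation points of $\sigma$ and multiplicities of eigenvalues, $\sigma_e(T)=\sigma_e(T_s)$ follows.

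\textbf{Part (iii).} Let $T=T^*$ and take $(f,g)\in(T(0)^\perp)^2$ in the adjoint of $T_s$ relative to $T(0)^\perp$. Testing against $(0,z)\in T_\infty$ gives $\langle g,0\rangle=0=\langle f,z\rangle$, because $f\perp T(0)$. So $(f,g)$ satisfies the adjoint identity against all of $T$, hence $(f,g)\in T^*=T$, and being in $(T(0)^\perp)^2$ it lies in $T_s$. Combined with part (i), this gives $T_s=T_s^*$. By Remark~\ref{rem:key-observation}, $D(T_s)$ is dense in $T(0)^\perp$, so $T_s$ is a genuine self-adjoint operator there.

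For $T_\infty$, its adjoint inside $T(0)^2$ consists of the pairs $(f,g)$ with $\langle f,z\rangle=0$ for all $z\in T(0)$, that is $f=0$. This set is exactly $\{0\}\times T(0)=T_\infty$. Finally, $\sigma(T)=\sigma(T_s)\subset\mathbb{R}$ by the classical result for self-adjoint operators.

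The main obstacle is the resolvent identity in (ii). One must check carefully that $T_\infty-\lambda I$ does not add anything to the kernel or to the multivalued part of the inverse, and that the domain of $(T-\lambda I)^{-1}$ equals all of $H$ exactly when $R(T_s-\lambda I)=T(0)^\perp$. The orthogonality $D(T)\perp T(0)$ is what makes both points work.
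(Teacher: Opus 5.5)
Your proof is correct, and the paper gives nothing to compare it against: it does not prove this theorem but imports it from \cite{3}. Your argument is therefore a self-contained derivation from Proposition~\ref{arens}. It rests on one observation, $D(T)\perp T(0)$, which follows from $(0,z)\in T\subset T^*$. Because of it, the kernel, range and inverse of $T-\lambda I$ all split orthogonally along $T(0)^\perp\oplus T(0)$.

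What this buys over citing is explicitness. The identity $(T-\lambda I)^{-1}=(T_s-\lambda I)^{-1}\oplus\bigl(T(0)\times\{0\}\bigr)$ gives several things at once:
\begin{itemize}
\item $\rho(T)=\rho(T_s)$, together with $\|(T-\lambda I)^{-1}\|=\|(T_s-\lambda I)^{-1}\|$;
\item equality of eigenspaces, hence of multiplicities, which is exactly what the paper's definition of $\sigma_e$ requires;
\item a clear statement that $\rho(T_s)$ must be computed in the Hilbert space $T(0)^\perp$, not in $H$, where $R(T_s-\lambda I)$ could never be all of $H$ unless $T(0)=\{0\}$.
\end{itemize}
Your computation of the adjoint of $T_\infty$ inside $T(0)^2$ also shows that $T_\infty$ is self-adjoint in $T(0)$ for every closed Hermitian $T$, not only when $T=T^*$.

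One small improvement: you cite Remark~\ref{rem:key-observation}, which the paper states without proof, for the density of $D(T_s)$. You can instead read it off from what you already proved. Since $T_s=T_s^*$ in $K=T(0)^\perp$ and $T_s$ is single-valued, you get $D(T_s)^\perp\cap K=T_s^*(0)=T_s(0)=\{0\}$.
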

    
{ 
\begin{lem}[Converse to Theorem \ref{thm:spectrum-decomp}(iii)]\label{lem:converse-self-adjoint}
	Let $H$ be a Hilbert space, let $V \subseteq H$ be a closed subspace, and let
	$S$ be a self-adjoint operator on $V^\perp$ (that is, $D(S) \subseteq V^\perp$
	is dense in $V^\perp$ and $S = S^*$ computed within $V^\perp$). Define
	\[
	T := \{(x, Sx + v) : x \in D(S),\ v \in V\} \subset H \times H.
	\]
	Then $T$ is a self-adjoint linear relation in $H$, with $D(T) = D(S)$,
	$T(0) = V$, and $T_s = S$.
\end{lem}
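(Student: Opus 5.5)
The plan is to verify $T = T^*$ directly from the definition of the adjoint in $H\times H$, using the orthogonal splitting $H = V^\perp \oplus V$ and the self-adjointness of $S$ in $V^\perp$.

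\emph{Preliminary identities.} First we record the easy structural facts. $T$ is a linear subspace, since $D(S)$ and $V$ are subspaces and $S$ is linear. Clearly $D(T) = D(S)$.

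For $T(0)$, suppose $(0, y) \in T$. Then $y = S0 + v = v$ for some $v \in V$, so $T(0) \subseteq V$. Conversely, $(0, v) \in T$ for every $v \in V$, so $T(0) = V$.

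For $T_s$, note that $V$ is closed, so $T(0)^\perp = V^\perp$. A pair $(x, Sx+v)$ lies in $V^\perp \times V^\perp$ exactly when $v = 0$, because $Sx \in V^\perp$. Hence $T \cap (V^\perp\times V^\perp) = \{(x,Sx) : x\in D(S)\}$, which is the graph of $S$, so $T_s = S$.

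\emph{Inclusion $T \subseteq T^*$.} Take $(x, Sx+v)$ and $(x', Sx'+v')$ in $T$. We need
\[
\langle Sx+v, x'\rangle = \langle x, Sx'+v'\rangle .
\]
The terms $\langle v, x'\rangle$ and $\langle x, v'\rangle$ vanish, since $x, x' \in V^\perp$ and $v, v' \in V$. The remaining identity $\langle Sx, x'\rangle = \langle x, Sx'\rangle$ is the symmetry of $S$.

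\emph{Inclusion $T^* \subseteq T$.} This is the main step. Let $(f,g) \in T^*$, so that
\[
\langle g, x\rangle = \langle f, Sx+v\rangle \quad \text{for all } x\in D(S),\ v\in V .
\]

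First take $x = 0$. This gives $\langle f, v\rangle = 0$ for all $v \in V$, so $f \in V^\perp$.

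Next decompose $g = g_1 + g_2$ with $g_1 \in V^\perp$ and $g_2 \in V$. Take $v = 0$ and $x \in D(S)\subseteq V^\perp$. Since $\langle g_2, x\rangle = 0$, we obtain
\[
\langle g_1, x\rangle = \langle f, Sx\rangle \quad \text{for all } x\in D(S) .
\]
This identity holds inside the Hilbert space $V^\perp$, so it says precisely that $(f, g_1)$ belongs to the adjoint of $S$ computed in $V^\perp$.

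Since $S = S^*$ in $V^\perp$, we get $f\in D(S)$ and $g_1 = Sf$. Therefore $(f,g) = (f, Sf + g_2)$ with $g_2\in V$, which is an element of $T$.

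The only delicate point is restricting the test vectors to $V^\perp$ so that the adjoint of $S$ inside $V^\perp$ applies. Density of $D(S)$ in $V^\perp$ is what makes that adjoint single-valued, and it is already part of the hypothesis.
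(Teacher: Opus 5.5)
Your proof is correct and takes essentially the same route as the paper. You identify $T(0)=V$ and $T_s=S$ directly, and for $(f,g)\in T^*$ you test with $x=0$ to get $f\in V^\perp$, split $g$ along $V^\perp\oplus V$, and use $D(S)\subseteq V^\perp$ together with $S=S^*$ in $V^\perp$ to conclude $f\in D(S)$ and $g=Sf+g_V$; the reverse inclusion $T\subseteq T^*$ comes from the symmetry of $S$ and the orthogonality of $V$ and $V^\perp$, exactly as in the paper's verification that each pair $(f,Sf+g_V)$ satisfies the adjoint identity.
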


\begin{proof}
	$T$ is clearly a linear subspace of $H\times H$, so $T \in LR(H)$.
	
	First we notice that $T(0)=V:$ since $0 \in D(S)$ and $S$ is linear, $S(0)=0$. Hence
	$(0,y) \in T$ if and only if $y = S(0)+v = v$ for some $v \in V$, that is if and only if $y \in V$.
	Thus $T(0)=V$.
	
	Next we see that $T^* = T.$ Let $(f,g) \in T^*$, that is \
	\begin{equation}\label{eq:conv-defn}
		\langle g,x\rangle = \langle f,\,Sx+v\rangle
		\qquad\text{for all } x\in D(S),\ v\in V.
	\end{equation}
	
	Then $f \in V^\perp:$ taking $x=0$ in \eqref{eq:conv-defn} gives
	$0=\langle g,0\rangle=\langle f,v\rangle$ for every $v \in V$.

	Write $g=g_V+g_{V^\perp}$ with $g_V\in V$, $g_{V^\perp}\in V^\perp$ (using
	$H=V\oplus V^\perp$, $V$ closed). Since $f\in V^\perp$,
	$\langle f,v\rangle=0$ for all $v\in V$, so \eqref{eq:conv-defn} reduces to
	\[
	\langle g,x\rangle=\langle f,Sx\rangle
	\qquad\text{for all } x\in D(S).
	\]
	As $D(S)\subseteq V^\perp$, we have $\langle g,x\rangle=\langle g_{V^\perp},x\rangle$,
	so
	\[
	\langle g_{V^\perp},x\rangle=\langle f,Sx\rangle
	\qquad\text{for all } x\in D(S),
	\]
	that is\ $(f,g_{V^\perp})\in S^*$ (adjoint computed within $V^\perp$). Since $S$
	is self-adjoint, $S=S^*$, so $f\in D(S)$ and $g_{V^\perp}=Sf$.
	
	Therefore, for any $f\in D(S)$ and any $g_V\in V$, the pair
	$(f,\,Sf+g_V)$ satisfies \eqref{eq:conv-defn}. Hence for all $x\in D(S)$, $v\in V$,
	\[
	\langle Sf+g_V,\,x\rangle
	=\langle Sf,x\rangle
	=\langle f,Sx\rangle
	=\langle f,Sx+v\rangle,
	\]
	using $x\in D(S)\subseteq V^\perp \Rightarrow \langle g_V,x\rangle=0$,
	self-adjointness of $S$, and $f\in V^\perp \Rightarrow \langle f,v\rangle=0$.
	
	Combining the above calculations, we get,
	\[
	T^* = \{(f,\,Sf+g_V) : f\in D(S),\ g_V\in V\} = T.
	\]
	
	Finally, $D(T)=D(S)$ is immediate from the definition of $T$, and $T_s = T \cap (T(0)^\perp\times T(0)^\perp) = T\cap(V^\perp\times V^\perp)$
	consists exactly of the pairs $(x,Sx)$ with $x\in D(S)$ (taking $v=0$ in the
	definition of $T$, and noting $Sx+v\in V^\perp$ forces $v=0$ since $Sx\in V^\perp$),
	so $T_s=S$.
\end{proof}

}
	\begin{thm}[{\cite{4}, Theorem 2.2}]\label{thm:nullspace}
		Let $T$ be a closed Hermitian linear relation in $H$. Then $N(T - \lambda) = N(T_s - \lambda)$ for every $\lambda \in \sigma_p(T)$.
	\end{thm}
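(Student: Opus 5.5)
Fix $\lambda\in\sigma_p(T)$. I will prove the two inclusions $N(T_s-\lambda)\subset N(T-\lambda)$ and $N(T-\lambda)\subset N(T_s-\lambda)$ using only the Arens decomposition (Proposition~\ref{arens}). Here $N(T-\lambda)=\{x\in H : (x,\lambda x)\in T\}$, and $N(T_s-\lambda)$ is defined in the same way with $T_s$ in place of $T$.

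The inclusion $N(T_s-\lambda)\subset N(T-\lambda)$ is immediate. If $(x,\lambda x)\in T_s$, then $(x,\lambda x)\in T$, because $T_s\subset T$ by definition.

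For the reverse inclusion, let $(x,\lambda x)\in T$. By Proposition~\ref{arens} we can write $(x,\lambda x)=(x,y_1)+(0,y_2)$ with $(x,y_1)\in T_s$ and $y_2\in T(0)$. The proof of that proposition also gives $x\in T(0)^\perp$: since $T$ is Hermitian, $T(0)\subset T^*(0)$, so $\langle x,z\rangle=0$ for every $z\in T(0)$. Hence $\lambda x\in T(0)^\perp$.

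Now compare components in the orthogonal decomposition $H=T(0)^\perp\oplus T(0)$, where $T(0)$ is closed because $T$ is closed. The identity $\lambda x=y_1+y_2$ has $\lambda x\in T(0)^\perp$, $y_1\in T(0)^\perp$ and $y_2\in T(0)$. Uniqueness of the decomposition forces $y_2=0$ and $y_1=\lambda x$. Therefore $(x,\lambda x)\in T_s$, that is, $x\in N(T_s-\lambda)$.

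The only step needing care is showing that every $x$ with $(x,\lambda x)\in T$ lies in $T(0)^\perp$. This is where the Hermitian hypothesis enters, through the pairing of $(x,\lambda x)\in T$ with $(0,z)\in T^*$ for $z\in T(0)$. The same argument works for $\lambda=0$, so no case split is needed; the hypothesis $\lambda\in\sigma_p(T)$ only ensures that the kernels are nontrivial.
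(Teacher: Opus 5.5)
Your proof is correct. The paper quotes this result without proof, but your argument is the same one the paper uses for the special cases $N(T)=N(T_s)$ in Theorem~\ref{thm:spectral}(e) and $N(I-T)=N(I-T_s)$ in Theorem~\ref{thm:fredholm}: write $T(x)=T_s(x)+T(0)$, use $x\in T(0)^\perp$ (for which, as you note, the Hermitian property suffices), and kill the $T(0)$-component via $T(0)\cap T(0)^\perp=\{0\}$. The decomposition step could even be skipped, since $x,\lambda x\in T(0)^\perp$ already gives $(x,\lambda x)\in T\cap(T(0)^\perp\times T(0)^\perp)=T_s$ by definition.
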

	
	\begin{lem}\label{lem:compact-Ts}
		Let $T \in LR(H)$ be closed and Hermitian.
		\begin{enumerate}[label=(\roman*)]
			\item If $T$ is compact, then $T_s$ is a compact operator on $T(0)^\perp$.
			\item Conversely, if $T$ is bounded and $T_s$ is a compact operator on $T(0)^\perp$, then $T$ is compact.
		\end{enumerate}
	\end{lem}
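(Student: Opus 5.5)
Both parts reduce to comparing $QT$ with $T_s$ through the orthogonal splitting of Proposition~\ref{arens}. Since $T$ is closed, $T(0)$ is closed, so $\overline{T(0)}=T(0)$ and $H/T(0)$ is isometrically isomorphic to $T(0)^\perp$. The isomorphism is $\Phi([y]) = P y$, where $P$ is the orthogonal projection onto $T(0)^\perp$; indeed $\inf_{v\in T(0)}\|y+v\| = \|Py\|$. For $x \in D(T_s)=D(T)\cap T(0)^\perp$, Proposition~\ref{arens} gives $T(x) = T_s x + T(0)$ with $T_s x \in T(0)^\perp$. Hence
\[
\Phi\bigl((QT)(x)\bigr) = T_s x \quad\text{and}\quad \|T(x)\| = \|T_s x\| .
\]
So on $D(T_s)$ the operator $\Phi\circ QT$ coincides with $T_s$.

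\textbf{Part (i).} Let $(x_n)$ be a bounded sequence in $D(T_s)$. Since $D(T_s)\subset D(T)$, compactness of $T$ together with Proposition~\ref{prop:compact-seq} gives a subsequence and elements $y_{n_k}\in T(x_{n_k})$ with $y_{n_k}\to y$ in $H$. Write $y_{n_k} = T_s x_{n_k} + v_k$ with $v_k\in T(0)$. Projecting, $T_s x_{n_k} = P y_{n_k} \to P y$ by continuity of $P$. Thus $T_s$ maps bounded sequences of its domain to sequences with convergent subsequences, so $T_s$ is compact from $D(T_s)$ into $T(0)^\perp$.

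The one point needing care is the meaning of ``compact operator on $T(0)^\perp$'', i.e.\ whether $T_s$ is bounded and everywhere defined there. Boundedness is automatic, since compactness of $QT$ forces boundedness and $\|T_s x\|=\|T(x)\|$. If the statement demands $D(T_s)=T(0)^\perp$, I would proceed as follows. A bounded closed operator has closed domain, and $T_s$ is closed by Theorem~\ref{thm:spectrum-decomp}(i). In the self-adjoint case Remark~\ref{rem:key-observation} gives that $D(T_s)=D(T)$ is dense in $T(0)^\perp$, so $D(T_s)=T(0)^\perp$. In the merely Hermitian case I would state the conclusion as compactness on the closed subspace $D(T_s)$, or extend by continuity to its closure.

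\textbf{Part (ii).} Now $T$ is bounded, so $D(T)=H$. Take a bounded sequence $(x_n)$ in $H$. The main obstacle is that $x_n$ need not lie in $T(0)^\perp$, so it is not directly in $D(T_s)$. In the Hermitian case this is settled by the argument in the proof of Proposition~\ref{arens}: every $x\in D(T)$ is orthogonal to $T(0)$. Hence $D(T)=H\subset T(0)^\perp$, which forces $T(0)=\{0\}$ and $D(T_s)=H$. In general, write $x_n = P x_n + (I-P)x_n$ and use only that $D(T)\subset T(0)^\perp$, so that $x_n = Px_n\in D(T_s)$.

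Then $(x_n)$ is a bounded sequence in $D(T_s)$, and compactness of $T_s$ gives a subsequence with $T_s x_{n_k}$ convergent. Since $T_s x_{n_k}\in T(x_{n_k})$, Proposition~\ref{prop:compact-seq} yields that $T$ is compact. Equivalently, $QT = \Phi^{-1}\circ T_s$ is compact as the composition of a compact operator with an isometry.
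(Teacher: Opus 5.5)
Your proposal is correct and follows essentially the paper's route: in both, the isometric identification $H/T(0)\cong T(0)^\perp$ turns $QT$ restricted to $D(T_s)$ into $T_s$, and compactness then transfers in each direction. Your version is slightly more careful than the paper's in two places. First, your observation that $D(T)\subset T(0)^\perp$ for Hermitian $T$ (so $(I-P)x_n=0$, and even $T(0)=\{0\}$ once $D(T)=H$) justifies the step $QT(x_{n_k})=Q\bigl(T_s(u_{n_k})+T(0)\bigr)$, which the paper uses without comment. Second, your remark that for merely Hermitian $T$ one only obtains compactness on the closed subspace $D(T_s)$ flags a domain issue that the paper settles only in the self-adjoint case (Remark~\ref{rem:compact-self-adjoint-operator-part}).
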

	
	\begin{proof}
		(i) Suppose $T$ is compact. Let $(x_n)$ be a bounded sequence in $D(T_s) = D(T) \cap T(0)^\perp$. Since $T$ is compact, $QT$ is compact, so there exists a subsequence $(x_{n_k})$ such that $(QT(x_{n_k}))$ converges in $H/\overline{T(0)}$. For $x \in T(0)^\perp$, we have $T(x) = T_s(x) + T(0)$, and therefore $QT(x) = T_s(x) + \overline{T(0)}$. Since the norm in the quotient space satisfies
		\[
		\|QT(x)\|_{H/\overline{T(0)}} = \operatorname{dist}(T_s(x), \overline{T(0)}) = \|T_s(x)\|
		\]
		(the last equality holds because $T_s(x) \in T(0)^\perp$ and the distance from an element of $T(0)^\perp$ to $\overline{T(0)}$ is exactly its norm), the canonical map
		$J:T(0)^\perp\to H/\overline{T(0)}$, $J(z)=z+\overline{T(0)}$, is an isometry. Hence $(T_s(x_{n_k}))$ is Cauchy in $T(0)^\perp$ and since $T(0)^\perp$ is complete, $(T_s(x_{n_k}))$ converges. Thus $T_s$ is a compact operator.
		
		(ii) Suppose $T$ is bounded and $T_s$ is compact. Let $(x_n)$ be a bounded sequence in $D(T)$. Decompose each $x_n = u_n + v_n$ where $u_n = P x_n \in D(T_s)$ and $v_n = (I-P)x_n \in T(0)$, with $P$ the orthogonal projection onto $T(0)^\perp$. Both $(u_n)$ and $(v_n)$ are bounded. By compactness of $T_s$, there is a subsequence $(u_{n_k})$ such that $T_s(u_{n_k}) \to y$ for some $y \in T(0)^\perp$. Then
		\[
		QT(x_{n_k}) = Q(T_s(u_{n_k}) + T(0)) = T_s(u_{n_k}) + \overline{T(0)} \to y + \overline{T(0)}
		\]
		in $H/\overline{T(0)}$. Hence $QT$ is compact, so $T$ is compact.
	\end{proof}
	
	\begin{rem}\label{rem:compact-self-adjoint-operator-part}
		If $T$ is compact and self-adjoint, then the operator part $T_s$ is a closed self-adjoint operator on $T(0)^\perp$ and is compact in the sense of Definition~\ref{def:compact}. The sequential characterization of compactness implies that $T_s$ is bounded: otherwise there would exist a bounded sequence $(x_n)$ in $D(T_s)$ for which $\|T_sx_n\|\to\infty$, contradicting compactness. Since $T_s$ is closed and bounded, its domain is closed; since it is self-adjoint, its domain is dense. Hence
		\[
		D(T_s)=T(0)^\perp,
		\]
		and $T_s$ is an everywhere-defined compact self-adjoint operator on $T(0)^\perp$. Thus the classical compact self-adjoint spectral theorem applies to $T_s$.
	\end{rem}
	
	\subsection{Selections}
	
	\begin{defn}[{\cite{0a}, Definition I.5.1}]\label{def:selection}
		Let $T \in LR(H)$. A linear operator $A$ is called a \emph{selection} of $T$ if
		\[
		T = A + (\{0\} \times T(0)) \quad \text{and} \quad D(T) = D(A).
		\]
		Equivalently, $T(x) = A(x) + T(0)$ for all $x \in D(T)$.
	\end{defn}
	
	\begin{rem}\label{rem:selection-norm}
		If $A$ is a selection of $T$, then for any $x \in D(T)$,
		\[
		\|T(x)\| = \inf_{z \in T(0)} \|A(x) + z\| \le \|A(x)\|,
		\]
		so $\|T\| \le \|A\|$.
	\end{rem}

	\subsection*{Finite-rank linear relations}
	
	\begin{defn}\label{def:finite-rank}
		A linear relation $T \in LR(H)$ is of \emph{finite rank} if $Dim(R(T)/T(0)) < \infty$. Equivalently, the induced operator $QT: D(T) \to H/\overline{T(0)}$ has finite-dimensional range.
	\end{defn}
	\section{Abstract results}
	
	\subsection{Finite-rank approximation of compact linear relations}
	
\begin{lem}\label{compact-selection}
	Let $H$ be a Hilbert space and let $T \in LR(H)$ be compact, bounded, and closed. Then $T$ admits a compact linear selection $A_0: H \to H$.
\end{lem}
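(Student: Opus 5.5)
The natural candidate for the selection is the orthogonal one. Since $T$ is closed, $T(0)$ is closed. Let $P$ be the orthogonal projection of $H$ onto $T(0)^\perp$, and define
\[
A_0 x := P y \quad \text{for any } y \in T(x), \qquad x \in H = D(T).
\]
Two elements of $T(x)$ differ by an element of $T(0)$, and $P$ annihilates $T(0)$, so $A_0$ is well defined. Linearity of $T$ makes $A_0$ linear. Note that we do not have Hermitian symmetry here, so the Arens decomposition of Proposition~\ref{arens} cannot be invoked directly. This projected selection is the general substitute for $T_s$: it agrees with $T_s$ whenever $T$ is Hermitian.

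I would verify three things in order.

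\emph{(1) $A_0$ is a selection.} Take $y\in T(x)$ and write $y = Py + (I-P)y$. Here $(I-P)y \in T(0)$, so $(x,(I-P)y)$ is not yet known to lie in $T$; but $(0,(I-P)y)\in T$, hence $(x,Py) = (x,y)-(0,(I-P)y) \in T$. This gives $T(x) = A_0x + T(0)$ with $D(A_0)=H=D(T)$, as Definition~\ref{def:selection} requires.

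\emph{(2) $A_0$ is bounded and realises the quotient norm.} For $y\in T(x)$, the vector $A_0x = Py$ is the element of the affine set $y+T(0)$ of minimal norm. Hence
\[
\|A_0 x\| = \operatorname{dist}(y, T(0)) = \|T(x)\| \le \|T\|\,\|x\|.
\]
In other words, the map $J: T(0)^\perp \to H/T(0)$, $z\mapsto z+T(0)$, is an isometric isomorphism, and $A_0 = J^{-1}\circ QT$. This is the same isometry already used in the proof of Lemma~\ref{lem:compact-Ts}(i).

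\emph{(3) $A_0$ is compact.} Let $(x_n)$ be bounded. Compactness of $T$ (Definition~\ref{def:compact}) gives a subsequence along which $QT(x_{n_k})$ converges in $H/T(0)$. Applying the isometry $J^{-1}$, the sequence $A_0x_{n_k}$ is Cauchy in the closed subspace $T(0)^\perp$, and therefore converges. Equivalently, $A_0 = J^{-1}QT$ is the composition of a compact operator with a bounded one.

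The only point requiring care, and the one I would regard as the main obstacle, is step (1): one must show that $(x,Py)$ genuinely belongs to $T$, rather than just some representative differing from it by an element of $\overline{T(0)}$. This is exactly where closedness of $T$ enters, since it makes $T(0)$ closed, so $\overline{T(0)} = T(0)$ and $(0,(I-P)y)\in T$. Everything else is the quotient isometry argument.
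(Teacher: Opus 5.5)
Your proposal is correct and uses the same construction as the paper. Your map $x\mapsto Py$ (for any $y\in T(x)$) is exactly the paper's $A_0=R\circ QT$, where $R(x+T(0))$ is the $T(0)^\perp$-component of $x$; the selection check $(x,A_0x)=(x,y)-(0,y-A_0x)\in T$ and the compactness argument (a compact operator composed with a bounded one) are also the same, and your remark that $R=J^{-1}$ is an isometry supplies the boundedness of $R$ that the paper only asserts.
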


\begin{proof}
	Since $T$ is bounded, we have $D(T) = H$. Let $Q: H \to H/\overline{T(0)}$ denote the quotient map.
	
First we show that $T(0)$ is closed.
	
	Indeed, $T(0) = \{y \in H : (0, y) \in T\}$ is the preimage of $T$ under the continuous map $y \mapsto (0, y)$. Since $T$ is closed, its preimage is closed. Alternatively, if $y_n \in T(0)$ and $y_n \to y$, then $(0, y_n) \to (0, y)$ in $H \times H$. Since $T$ is closed, $(0, y) \in T$, so $y \in T(0)$. Therefore, $T(0) = \overline{T(0)}$.

	Since $T$ is compact, $QT: H \to H/T(0)$ is a compact operator (here we use $T(0) = \overline{T(0)}$). Since $T$ is bounded, $QT$ is bounded. Therefore, $QT$ is a compact bounded linear operator from $H$ to the Hilbert space $H/T(0)$.

Let $x+T(0)\in H/T(0)$. Since $T(0)$ is closed, by the projection theorem, for every $x\in H$ there exist unique $x_1\in T(0)^\perp$ and $x_2\in T(0)$ such that
$x=x_1+x_2$. Then, $x+T(0)=x_1+x_2+T(0)=x_1+T(0)$.
\begin{center}
    Define, $R:H/T(0)\longrightarrow H$ by
$R(x+T(0))=x_1$.
\end{center}
Then $(QR)(x+T(0))=Q(x_1)=x_1+T(0)=x+T(0)$. Hence, $QR=I_{H/T(0)}$.

	That is, $R$ be the  right inverse of $Q$ mapping $H/T(0)$ onto $T(0)^\perp$. 
	
	Define:
	\[
	A_0 := R \circ QT.
	\]
	Then $A_0: H \to H$ is compact (composition of compact $QT$ with bounded $R$) and linear.
	
	Now we show that $A_0$ is a selection of $T$. Let $x \in H$, since $D(T) = H$, there exists $y \in T(x)$. We have:
	\[
	Qy = QTx = Q(R(QTx)) = QA_0x.
	\]
	Therefore, $y - A_0x \in T(0)$ (since $Q(y - A_0x) = Qy - QA_0x = 0$ in $H/T(0)$).
	
	Since $(x, y) \in T$ and $(0, y - A_0x) \in T$ (because $y - A_0x \in T(0)$), the linearity of $T$ gives:
	\[
	(x, A_0x) = (x, y) - (0, y - A_0x) \in T.
	\]
	Therefore, $A_0x \in T(x)$ for all $x \in H$, proving that $A_0$ is a selection of $T$.
	
	finally we show that $A_0$ is compact.
	
	Since $A_0 = R \circ QT$ where $R$ is bounded and $QT$ is compact, $A_0$ is compact.
	
	Therefore, $A_0: H \to H$ is a compact linear selection of $T$.
\end{proof}
\begin{lem}\label{lem:finite-rank-construction}
	Let $T \in LR(H)$ be as in Lemma~\ref{compact-selection}, let $A$ be a compact
	selection of $T$, and let $B$ be a bounded finite-rank operator on $H$. Then
	each of the linear relations
	\[
	BA + T(0), \qquad AB + T(0), \qquad BAB + T(0)
	\]
	is of finite rank.
\end{lem}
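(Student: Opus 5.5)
The plan is to interpret each relation $C + T(0)$, for a linear operator $C$ defined on $H$, as the relation $\{(x, Cx + z) : x \in H,\ z \in T(0)\}$. This reading is consistent with Definition~\ref{def:selection}, where $T = A + (\{0\}\times T(0))$. We then verify Definition~\ref{def:finite-rank} directly by computing the range and the multivalued part. For such a relation $S := C + T(0)$, we first check that
\[
S(0) = \{C0 + z : z \in T(0)\} = T(0), \qquad R(S) = R(C) + T(0).
\]
Thus the quantity to control is $\dim\bigl((R(C)+T(0))/T(0)\bigr)$.

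The key observation is a general linear-algebra fact. The natural map $R(C) \to (R(C)+T(0))/T(0)$, $y \mapsto y + T(0)$, is surjective: any class $y + z + T(0)$ with $y\in R(C)$ and $z\in T(0)$ equals $y + T(0)$. Hence
\[
\dim\bigl(R(S)/S(0)\bigr) \le \dim R(C),
\]
and it suffices to show that $C$ has finite-dimensional range in each of the three cases.

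We then treat the three cases. For $C = BA$, we have $R(BA) \subset R(B)$, which is finite-dimensional. For $C = BAB$, likewise $R(BAB)\subset R(B)$. For $C = AB$, we have $R(AB) = A(R(B))$, the image of a finite-dimensional space under the linear map $A$, so $\dim R(AB) \le \dim R(B) < \infty$. Compactness of $A$ and boundedness of $B$ are not needed for finite rank; they only guarantee that the resulting relations are bounded (indeed compact), which will matter later for approximation. As a remark, we may also note that $S(0)=T(0)$ is closed by Lemma~\ref{compact-selection}'s argument, so $\overline{S(0)} = T(0)$ and the equivalent formulation via $QS$ has the same finite-dimensional range.

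The only step requiring care is pinning down the meaning of ``$C + T(0)$'', namely that its multivalued part is exactly $T(0)$ and not something larger. Since $C$ is single-valued with $D(C)=H$, this is immediate. We expect no genuine obstacle.
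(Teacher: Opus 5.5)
Your proof is correct and follows essentially the same route as the paper. Both arguments use $R(BA),R(BAB)\subseteq R(B)$ and $R(AB)=A(R(B))$ to get $\dim R(C)<\infty$ for each $C\in\{BA,AB,BAB\}$, and then note that the range modulo $T(0)$ is the image of $R(C)$. The paper phrases this last step through the induced quotient operator $x\mapsto Q(Cx)$, whereas you first check $S(0)=T(0)$ and then verify $\dim\bigl(R(S)/S(0)\bigr)\le\dim R(C)$ directly; this is the same observation.
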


\begin{proof}
	Since $B$ has finite rank, so do $BA$ (as $R(BA)\subseteq R(B)$), $AB$ (as
	$R(AB)=A(R(B))$, the image of a finite-dimensional space under a linear map,
	and $BAB$ (as $R(BAB)\subseteq R(B)$). Write $C$ for any one of $BA$, $AB$,
	$BAB$, and let $k={Dim} ( R(C))<\infty$. For any $(x,y)\in C+T(0)$,
	we have $y=C(x)+z$ for some $z\in T(0)$. In the quotient, the induced
	operator sends $x\mapsto Q(C(x))$, so its range is contained in $Q(R(C))$,
	which has dimension at most $k$. Hence $C+T(0)$ is of finite rank by
	Definition~\ref{def:finite-rank}.
\end{proof}
	
	\begin{lem}\label{lem:operator-approx}
		Let $A$ be a compact operator on a Hilbert space $H$, and let $(P_n)$ be a sequence of finite-rank orthogonal projections such that $P_n \to I$ strongly. Then
		\[
		\|P_n A - A\| \to 0, \qquad \|A P_n - A\| \to 0, \qquad \|P_n A P_n - A\| \to 0
		\]
		as $n \to \infty$.
	\end{lem}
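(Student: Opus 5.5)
The plan is to prove the three limits in two stages. First I show that $P_n A \to A$ in operator norm using compactness of $A$. Then I obtain $AP_n \to A$ by passing to adjoints. The third limit follows from the first two by an elementary splitting.

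\emph{Step 1: $\|P_nA - A\|\to 0$.} Since $P_n$ are orthogonal projections, $\|P_n\|\le 1$, so the family $(I-P_n)$ is uniformly bounded by $2$. Let $K=\overline{A(B_H)}$, where $B_H$ is the closed unit ball; by compactness of $A$, $K$ is compact. Fix $\varepsilon>0$ and choose a finite $\varepsilon$-net $y_1,\dots,y_m$ of $K$. Strong convergence gives $N$ such that $\|(I-P_n)y_j\|<\varepsilon$ for all $j\le m$ and $n\ge N$. For $\|x\|\le 1$ pick $j$ with $\|Ax-y_j\|<\varepsilon$. Then
\[
\|(I-P_n)Ax\|\le \|(I-P_n)(Ax-y_j)\|+\|(I-P_n)y_j\|\le 2\varepsilon+\varepsilon .
\]
Taking the supremum over $x$ gives $\|P_nA-A\|\le 3\varepsilon$ for $n\ge N$. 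This is the standard "uniform convergence on compact sets" argument.

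\emph{Step 2: $\|AP_n-A\|\to 0$.} This is the step needing care, because strong convergence on the right does not directly control the norm. I will use that $A^*$ is compact (Schauder's theorem) and $P_n^*=P_n$. Then
\[
\|AP_n-A\|=\|(AP_n-A)^*\|=\|P_nA^*-A^*\|\to 0
\]
by Step 1 applied to $A^*$. This is where orthogonality (self-adjointness) of the $P_n$ is essential. If one wanted to avoid Schauder, one could instead argue via the polar decomposition or finite-rank approximation of $A$, but the adjoint route is the cleanest.

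\emph{Step 3: $\|P_nAP_n-A\|\to 0$.} Write
\[
P_nAP_n-A=P_n(AP_n-A)+(P_nA-A).
\]
Since $\|P_n\|\le1$, this gives $\|P_nAP_n-A\|\le\|AP_n-A\|+\|P_nA-A\|$, and both terms tend to $0$ by Steps 1 and 2. Note also that each $P_nA$, $AP_n$, $P_nAP_n$ is finite rank, which is what Lemma~\ref{lem:finite-rank-construction} will later combine with this result.
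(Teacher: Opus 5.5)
Your proof is correct and complete. The paper states this lemma without proof, as a standard fact. The only part it argues explicitly is your Step~3: the splitting $P_nAP_n-A=P_n(AP_n-A)+(P_nA-A)$ with $\|P_n\|\le 1$ appears in the proof of Theorem~\ref{thm:approx}.

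Your Steps~1 and~2 supply the part the paper leaves to the reader. Step~1 uses uniform convergence of $I-P_n$ to $0$ on the compact set $\overline{A(B_H)}$. Step~2 uses the adjoint identity $\|AP_n-A\|=\|P_nA^*-A^*\|$, which relies on $P_n^*=P_n$ and Schauder's theorem. You also correctly identify that orthogonality of the $P_n$ is essential for the second limit: it can fail for non-orthogonal projections that converge only strongly.
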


	\begin{thm}[Finite-rank approximation]\label{thm:approx}
		Let $H$ be a Hilbert space and let $T \in LR(H)$ be compact, bounded, and closed. Let $A: H \to H$ be a compact selection of $T$ (which exists by Lemma~\ref{compact-selection}). Let $(P_n)$ be a sequence of finite-rank orthogonal projections on $H$ such that $P_n \to I$ strongly. Define the linear relations
		\[
		T_n^P = P_n A + T(0), \qquad
		T_n^S = A P_n + T(0), \qquad
		T_n^G = P_n A P_n + T(0).
		\]
		Then each $T_n^P, T_n^S, T_n^G$ is a finite-rank linear relation, and
		\[
		\|T_n^P - T\| \to 0, \qquad \|T_n^S - T\| \to 0, \qquad \|T_n^G - T\| \to 0
		\]
		as $n \to \infty$.
	\end{thm}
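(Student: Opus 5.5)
Finite rank of $T_n^P$, $T_n^S$, $T_n^G$ follows at once from Lemma~\ref{lem:finite-rank-construction}: take $B=P_n$, which is a bounded finite-rank operator. It remains to prove the three norm convergences, and the plan is to reduce each to the operator estimates of Lemma~\ref{lem:operator-approx}.

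The first step is to identify the difference relations. Write $C_n$ for any one of $P_nA$, $AP_n$, $P_nAP_n$, so that $T_n = C_n + T(0)$, where $C_n+T(0)$ means $\{(x, C_nx+z): z\in T(0)\}$. Because $A$ is a selection of $T$ with $D(A)=D(T)=H$, we have $T = A + T(0)$ in the same sense. Both relations have domain $H$, so by the definition of the sum,
\[
T_n - T = \{(x,\, C_nx + z_1 - Ax - z_2) : x\in H,\ z_1,z_2\in T(0)\} = (C_n - A) + T(0),
\]
since $T(0)$ is a linear subspace and $T(0)-T(0)=T(0)$. In particular $(T_n-T)(0)=T(0)$, and this subspace is closed because $T$ is closed (as shown in the proof of Lemma~\ref{compact-selection}). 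Hence $C_n-A$ is a selection of $T_n-T$.

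The second step is the norm estimate. By Remark~\ref{rem:selection-norm} applied to this selection,
\[
\|T_n - T\| \le \|C_n - A\| .
\]
Lemma~\ref{lem:operator-approx} applies because $A$ is compact and $P_n\to I$ strongly, and it gives $\|P_nA-A\|\to0$, $\|AP_n-A\|\to0$ and $\|P_nAP_n-A\|\to0$. This yields all three convergences.

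The main obstacle is the bookkeeping for the difference of two relations. One must check carefully that $T_n - T$ is defined on all of $H$, that its multivalued part is exactly $T(0)$ rather than something larger, and that $C_n - A$ is genuinely a selection in the sense of Definition~\ref{def:selection}; all three points depend on $D(T)=H$ and on $A$ being a selection. Lemma~\ref{lem:operator-approx} itself is standard and may be assumed. If it needed proof, I would use that $P_n\to I$ uniformly on the relatively compact set $\overline{A(\text{unit ball})}$, and argue via adjoints ($A^*$ is compact and $P_n$ is self-adjoint) for $AP_n$, with the triangle inequality handling $P_nAP_n$.
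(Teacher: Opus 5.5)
Your proposal is correct and follows the paper's proof essentially step for step. You get finite rank from Lemma~\ref{lem:finite-rank-construction} with $B=P_n$, identify $C_n-A$ (for $C_n\in\{P_nA,\,AP_n,\,P_nAP_n\}$) as a selection of $T_n-T$, and conclude with $\|T_n-T\|\le\|C_n-A\|\to 0$ via Remark~\ref{rem:selection-norm} and Lemma~\ref{lem:operator-approx}. The only differences are cosmetic: you treat the three cases uniformly and spell out the domain and multivalued-part bookkeeping for the difference relation, whereas the paper handles $P_nAP_n-A$ separately through the triangle inequality $\|P_nAP_n-A\|\le\|AP_n-A\|+\|P_nA-A\|$.
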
\begin{proof}
By Lemma~\ref{lem:finite-rank-construction}, applied with $B=P_n$, each of
$T_n^P=P_nA+T(0)$, $T_n^S=AP_n+T(0)$, and $T_n^G=P_nAP_n+T(0)$ is a finite-rank
linear relation.
		
		Now we prove the norm convergence for $T_n^P$. We have
		\[
		T_n^P - T = (P_n A + T(0)) - (A + T(0)).
		\]
		For $x \in H$,
		\[
		(T_n^P - T)(x) = P_n A(x) - A(x) + T(0).
		\]
		Thus $P_n A - A$ is a selection of $T_n^P - T$, and by Remark~\ref{rem:selection-norm},
		\[
		\|T_n^P - T\| \le \|P_n A - A\| \to 0
		\]
		by Lemma~\ref{lem:operator-approx}.\\
        
\noindent Similar proof will work for the convergence of  $T_n^P. $\\

\noindent For the convergence of $T_n^G$, take $x \in H$,
		\[
		(T_n^G - T)(x) = P_n A P_n(x) - A(x) + T(0) = (P_n A P_n - A)(x) + T(0),
		\]
		so $P_n A P_n - A$ is a selection of $T_n^G - T$.
		Since
		\[
		P_nAP_n - A = P_n(AP_n - A) + (P_nA - A),
		\]
		we have
		\[
		\|P_nAP_n - A\| \le \|P_n\|\|AP_n - A\| + \|P_nA - A\| = \|AP_n - A\| + \|P_nA - A\| \to 0.
		\]
		By Remark~\ref{rem:selection-norm},
		\[
		\|T_n^G - T\| \le \|P_nAP_n - A\| \to 0.
		\]
	\end{proof}
	
	\begin{exam}\label{ex:shift}
		Let $H = \ell^2(\mathbb{N})$ and define $F \in LR(H)$ by
		\[
		F(x_1, x_2, x_3, \dots) = \{(c, x_1, \tfrac{x_2}{2}, \tfrac{x_3}{3}, \dots) \mid c \in \mathbb{C}\}.
		\]
		Here $F(0) = \operatorname{span}\{e_1\}$. Write $F = K + F(0)$ where
		\[
		K(x_1, x_2, x_3, \dots) = (0, x_1, \tfrac{x_2}{2}, \tfrac{x_3}{3}, \dots).
		\]
		$K$ is a compact operator, and $K$ is a selection of $F$. Thus $F$ is compact and closed.
		
		Let $P_n$ be the orthogonal projection onto the first $n$ coordinates. Define $F_n^P = P_n K + F(0)$. The quotient norm gives
		\[
		\|F_n^P - F\|^2 \leq \sup_{\|x\|=1} \sum_{k=n}^{\infty} \bigl|\tfrac{x_k}{k}\bigr|^2 \le \frac{1}{n^2} \to 0.
		\]
		Thus $\|F_n^P - F\| \to 0$.\\
        Similar computations will work for $F_n^S$ and $F_n^G.$
	\end{exam}
		\subsection{The Fredholm alternative}
	
	\begin{thm}[Fredholm Alternative for Compact Selfadjoint Linear Relations]\label{thm:fredholm}
		Let $H$ be a Hilbert space and let $T \in LR(H)$ be compact and self-adjoint. Then:
		\begin{enumerate}[label=(\alph*)]
			\item Exactly one of the following holds:
			\begin{enumerate}[label=(\roman*)]
				\item For every $y \in H$, $y \in (I - T)(x)$ has a solution $x \in D(T)$.
				\item There exists nonzero $x \in D(T)$ with $x \in T(x)$ (that is, $1 \in \sigma_p(T)$).
			\end{enumerate}
			Moreover, if (ii) holds, then $Dim ( N(I - T)) < \infty$.

			\item $y \in (I - T)(x)$ has a solution for every $y$ if and only if  $0 \in (I - T)(x)$ has only the zero solution.
		\end{enumerate}
	\end{thm}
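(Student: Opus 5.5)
The plan is to reduce everything to the classical Fredholm alternative for the compact self-adjoint operator $T_s$ on the Hilbert space $T(0)^\perp$, using the Arens decomposition (Proposition~\ref{arens}). Write $V = T(0)$; it is closed since $T=T^*$ is closed. By Remark~\ref{rem:key-observation} and Remark~\ref{rem:compact-self-adjoint-operator-part}, $D(T) = D(T_s) = V^\perp$, and $T_s$ is an everywhere-defined compact self-adjoint operator on $V^\perp$. Moreover $T(x) = T_s x + V$ for every $x \in D(T)$.

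First I will describe $I-T$ concretely. Interpreting $I-T$ as $\{(x, x-y) : (x,y)\in T\}$, we get $(I-T)(x) = \{x - T_s x - v : v \in V\}$ for $x \in V^\perp$. Given $y \in H$, decompose $y = y_1 + y_2$ with $y_1 \in V^\perp$ and $y_2\in V$. Since $x - T_s x \in V^\perp$, uniqueness of the orthogonal decomposition shows that $y \in (I-T)(x)$ holds if and only if $(I - T_s)x = y_1$ (with $v = -y_2$, always available since $V$ is a subspace). Hence the relation equation $y\in(I-T)(x)$ is solvable for every $y\in H$ exactly when $I-T_s$ is surjective on $V^\perp$.

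Second, I will identify the kernels. If $x \in T(x)$ with $x\in D(T)$, then $x = T_s x + v$. Here $x$ and $T_s x$ lie in $V^\perp$ and $v\in V$, so $v=0$. Thus $N(I-T) = N(I-T_s)$, which is consistent with Theorem~\ref{thm:nullspace}, and $1\in\sigma_p(T)$ if and only if $1\in\sigma_p(T_s)$.

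With these two translations, (a) and (b) follow from the classical Fredholm alternative for the compact operator $T_s$ on $V^\perp$. That result says $I - T_s$ is surjective iff it is injective, and $\dim N(I-T_s)<\infty$. For "exactly one", suppose (ii) holds. Then $I-T_s$ has nontrivial kernel, so it is not surjective on $V^\perp$ (index zero). Some $y_1\in V^\perp$ is therefore not attained, and (i) fails for $y=y_1$. Conversely, if (ii) fails, then $I-T_s$ is injective, hence surjective, and (i) holds.

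The degenerate case $V^\perp=\{0\}$, i.e.\ $T=\{0\}\times H$, is trivial: $D(T)=\{0\}$, every $y\in H$ lies in $(I-T)(0)=H$, and there is no nonzero fixed point.

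The main point requiring care is the orthogonal bookkeeping in the first step. Specifically, one must check that the $V$-component of $y$ can always be absorbed into the multivalued part, and that this cannot interfere with the $V^\perp$-component. This works because $D(T)\subset V^\perp$ and $R(T_s)\subset V^\perp$ for a self-adjoint relation. Without self-adjointness, $x - T_s x$ need not lie in $V^\perp$, and the reduction would break.
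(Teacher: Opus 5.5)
Your proposal is correct and follows essentially the same route as the paper. Both arguments use the Arens decomposition to reduce to the compact self-adjoint operator $T_s$ on $T(0)^\perp$, absorb the $T(0)$-component of $y$ into the multivalued part, identify $N(I-T)=N(I-T_s)$, and then invoke the classical Fredholm alternative. Your orthogonal bookkeeping (showing $y\in(I-T)(x)$ if and only if $(I-T_s)x=y_1$), together with your use of Remark~\ref{rem:compact-self-adjoint-operator-part} to get $D(T_s)=T(0)^\perp$, makes explicit the step ``(a)(i) forces (i$_s$)'' that the paper only asserts.
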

	\begin{proof}
		Since $T$ is self-adjoint, $T(0) = T^*(0)$ automatically, and by Proposition~\ref{arens} (in its self-adjoint form), $T$ decomposes as $T = T_s \oplus T_\infty$, where $T_s$ is a single-valued compact self-adjoint operator on the Hilbert space $K := T(0)^\perp$, with $D(T_s) = D(T) \cap T(0)^\perp$, and $T_\infty = \{0\} \times T(0)$. By Remark~\ref{rem:key-observation}, $D(T) \subset T(0)^\perp$, so in fact $D(T) = D(T_s)$.
		
		\textbf{(a)} For $x \in D(T) = D(T_s)$, we have $T(x) = T_s(x) + T(0)$, so
		\[
		(I-T)(x) = x - T_s(x) - T(0) = (I - T_s)(x) - T(0).
		\]
		Hence $y \in (I-T)(x)$ if and only if $(I-T_s)(x) = y + z_0$ for some $z_0 \in T(0)$, that is, if and only if $y \in (I-T_s)(x) - T(0)$.
		
		By the classical Fredholm alternative for the compact self-adjoint operator $T_s$ on $K$, exactly one of the following holds:
		\begin{enumerate}
			\item[(i$_s$)] $I - T_s$ is a bijection of $K$ onto $K$;
			\item[(ii$_s$)] $N(I-T_s) \neq \{0\}$ in $K$.
		\end{enumerate}
		
		Suppose (i$_s$) holds. Given any $y \in H$, decompose $y = y_K + y_0$ with $y_K \in K$, $y_0 \in T(0)$. Since $I - T_s$ is onto $K$, there exists $x \in D(T_s)$ with $(I-T_s)(x) = y_K$. Then
		\[
		y = y_K + y_0 = (I-T_s)(x) + y_0 \in (I-T_s)(x) - T(0) = (I-T)(x),
		\]
		since $-y_0 \in T(0)$. This proves (a)(i).\\ For uniqueness modulo $T(0)$: if $x_1, x_2 \in D(T_s)$ both satisfy $y \in (I-T)(x_j)$, then $(I-T_s)(x_1) - (I-T_s)(x_2) \in T(0)$. But $(I-T_s)(x_1) - (I-T_s)(x_2) \in K$ (as $T_s$ maps into $K$ and $x_1, x_2 \in K$), and $K \cap T(0) = \{0\}$, so $(I-T_s)(x_1 - x_2) = 0$. Since $I - T_s$ is injective on $K$ (from (i$_s$)), then $x_1 = x_2$; in particular the solution is unique in $D(T_s)$.
		
		Suppose instead (ii$_s$) holds: there is a nonzero $u \in D(T_s) \subset D(T)$ with $T_s(u) = u$. Then $u \in T_s(u) + T(0) = T(u)$, so $u \in T(u)$, proving (a)(ii). Moreover,
		\[
		N(I-T) = \{x \in D(T) \mid 0 \in (I-T)(x)\} = \{x \in D(T_s) \mid (I-T_s)(x) \in T(0)\}.
		\]
		Since $(I-T_s)(x) \in K$ for $x \in D(T_s)$ and $K \cap T(0) = \{0\}$, this forces $(I-T_s)(x) = 0$, that is $N(I-T) = N(I-T_s)$. By the classical Fredholm alternative for the compact operator $T_s$, $Dim( N(I-T_s)) < \infty$, so $Dim (N(I-T) )< \infty$.
		
		Finally, (i) and (ii) cannot both hold, since (i$_s$) and (ii$_s$) are mutually exclusive and each of (a)(i), (a)(ii) forces the corresponding one of (i$_s$), (ii$_s$).

		\textbf{(b)} By part (a), the inhomogeneous inclusion $y \in (I-T)(x)$ has a solution for every $y \in H$ if and only if alternative (a)(i) holds, that is\ if and only if (i$_s$) holds, that is\ if and only if (ii$_s$) fails, that is\ if and only if $N(I-T_s) = \{0\}$. By the identity $N(I-T) = N(I-T_s)$ from part (a), this is equivalent to $N(I-T) = \{0\}$, that is\ to the homogeneous inclusion $0 \in (I-T)(x)$ having only the zero solution (indeed, since $N(I-T) \subset D(T_s) \subset T(0)^\perp$ meets $T(0)$ only at $0$).
	\end{proof}
   \begin{rem}
	Unlike in the operator case, the dimensions of $N(I-T)$ and $N(I-T^*)$
	need not agree for an arbitrary compact linear relation $T$. The following
	example illustrates this.
\end{rem}
    \begin{exam}
        
Let $T\in LR(\ell^2)$ with $D(T)=\ell^2$ be defined by
\[T(x_1,x_2,x_3,\ldots)
=
\left\{
\left(c,x_1,\frac{x_2}{2^2},\frac{x_3}{3^2},\ldots\right)
\,\middle|\, c\in\mathbb{C}
\right\},
\qquad x=(x_1,x_2,x_3,\ldots)\in D(T).\]

\noindent Clearly, $T$ is compact. By the definition of the adjoint linear relation, we obtain
\[T^*(0,x_1,x_2,x_3,\ldots)
=
\left(x_1,\frac{x_2}{2^2},\frac{x_3}{3^2},\ldots\right),
\qquad D(T^*)=\{e_1\}^{\perp}.\]

\noindent Consequently, for $x=(0,x_1,x_2,\ldots)\in D(T^*)$,
\[(I-T^*)(x)
=
\left(
-x_1,\,
x_1-\frac{x_2}{2^2},\,
x_2-\frac{x_3}{3^2},\,
x_3-\frac{x_4}{4^2},\ldots
\right).\]

 \noindent Therefore, the inclusion $0\in (I-T^*)(x)$ implies successively
we get, $x=0$. Thus, $x=0$ is the only solution of the inclusion $0\in (I-T^*)(x)$.

\noindent On the other hand,
\[(I-T)(x)
=
\left\{
\left(
x_1-c,\,
x_2-x_1,\,
x_3-\frac{x_2}{2^2},\,
x_4-\frac{x_3}{3^2},\ldots
\right)
\,\middle|\,c\in\mathbb{C}
\right\}.\]

\noindent Hence, the inclusion $0\in (I-T)(x)$ is equivalent to
$x_1=c,\qquad x_n=\frac{x_{n-1}}{(n-1)^2},\qquad n\geq2.$ It follows that
$x=
\left(
c,c,\frac{c}{(2!)^2},\frac{c}{(3!)^2},\ldots
\right),
\qquad c\in\mathbb{C}.$

\noindent Therefore,
$N(I-T)
=
\left\{
\left(
c,c,\frac{c}{(2!)^2},\frac{c}{(3!)^2},\ldots
\right)
\,\middle|\,c\in\mathbb{C}
\right\}
\neq\{0\}.$

    \end{exam}

	\begin{exam}\label{ex:fredholm1}
		Let $T \in LR(\ell^2)$ with $D(T) = e_1^\perp$ be defined by,
		\[
		T(x_1, x_2, x_3, \dots) = \{(c, x_2, \tfrac{x_3}{2^2}, \tfrac{x_4}{3^2}, \dots) \mid c \in \mathbb{C}\}, \qquad x \in D(T).
		\]
		Here $T(0) = \operatorname{span}\{e_1\}$ and $D(T) = e_1^\perp = T(0)^\perp$, which is the correct domain for a self-adjoint relation (see Remark~\ref{rem:key-observation}). The operator part is
		\[
		T_s(x_1, x_2, x_3, \dots) = (0, x_2, \tfrac{x_3}{2^2}, \tfrac{x_4}{3^2}, \dots),
		\]
		acting on $T(0)^\perp = e_1^\perp$. The operator $T_s$ is a compact diagonal operator on $e_1^\perp$ with diagonal entries $(1,\frac{1}{2^2},\frac{1}{3^2},\dots)$, so $T_s$ is compact and self-adjoint. By Lemma ~\ref{lem:compact-Ts}(ii), $T$ is compact, and since $T_s$ is self-adjoint with $D(T) = T(0)^\perp$, the relation $T$ is self-adjoint.
		
		The condition $x \in T(x)$ means there exists $c \in \mathbb{C}$ such that
		\[
		(x_1, x_2, x_3, \dots) = (c, x_2, \tfrac{x_3}{2^2}, \tfrac{x_4}{3^2}, \dots).
		\]
		Since $x \in D(T) = e_1^\perp$, we have $x_1 = 0$, hence $c = 0$. Comparing components gives
		\[
		x_n = \frac{x_n}{(n-1)^2} \quad \text{for } n \ge 3,
		\]
		which forces $x_n = 0$ for all $n \ge 3$, while $x_2$ is arbitrary. Thus
		\[
		N(I-T) = \{x \in e_1^\perp \mid x \in T(x)\} = \operatorname{span}\{(0, 1, 0, 0, \dots)\} = \operatorname{span}\{e_2\},
		\]
		so alternative (ii) holds with $Dim( N(I - T)) = 1 < \infty$.
	\end{exam}
	
	\begin{exam}\label{ex:fredholm2}
		Let $D = e_1^\perp = \{x \in \ell^2 \mid x_1 = 0\}$ and define $T \in LR(\ell^2)$ with $D(T) = D$ by
		\[
		T(x) = \{(c, 0, 0, \dots) \mid c \in \mathbb{C}\}, \qquad x \in D.
		\]
		Here $T(0) = \operatorname{span}\{e_1\}$ and $D(T) = e_1^\perp = T(0)^\perp$, consistent with self-adjointness. The operator part is $T_s = 0$ on $T(0)^\perp$, which is trivially compact, so $T$ is compact and self-adjoint.
		
		The condition $x \in T(x)$ would require $x = (c, 0, 0, \dots)$ for some $c \in \mathbb{C}$, but since $x \in D = e_1^\perp$, we have $x_1 = 0$, forcing $c = 0$ and hence $x = 0$. Thus alternative (ii) fails. For any $y \in \ell^2$, the inclusion $y \in (I-T)(x)$ is solved by $c = -y_1$ and $x = (0, y_2, y_3, \dots) \in D$, so alternative (i) holds with a unique solution.
	\end{exam}
	\subsection{Spectral theorem for compact self-adjoint linear relations}

	\begin{thm}[Spectral Theorem for compact self-adjoint linear relations]\label{thm:spectral}
		Let $H$ be a complex Hilbert space and let $T \in LR(H)$ be a compact self-adjoint linear relation. Then the following hold.
		
		\begin{enumerate}[label=(\alph*)]
			\item There exists a finite or infinite sequence $\{s_n\}_{n=1}^N$ (where $N \in \mathbb{N}_0 \cup \{\infty\}$) of nonzero real numbers such that
			\[
			\textrm{if }N\ge1,\quad |s_1| \ge |s_2| \ge \dots > 0,
			\]
			and a corresponding orthonormal sequence $\{u_n\}_{n=1}^N \subset T(0)^\perp$ such that
			\[
			(u_n, s_n u_n) \in T, \quad \text{that is, } T(u_n) = s_n u_n + T(0), \qquad n = 1, 2, \dots, N.
			\]
			If $N = \infty$, then $s_n \to 0$ as $n \to \infty$.
			
			\item The operator part $T_s$ has the diagonal representation
			\[
			T_s(x) = \sum_{n=1}^N s_n \langle x, u_n \rangle u_n
			\qquad \text{for all } x \in T(0)^\perp,
			\]
			where the series converges in the norm of $H$.
			
			\item For every $x \in D(T)$ and every $y \in T(x)$, there exists an element $y_\infty \in T(0)$ such that
			\[
			y = \sum_{n=1}^N s_n \langle x, u_n \rangle u_n + y_\infty.
			\tag{3.1}
			\]
			The series in (3.1) converges in $H$. Moreover, $\{u_n\}_{n=1}^N$ is an orthonormal basis for $\overline{R(T_s)} \subset T(0)^\perp$.
			
			\item 
			\[
			\sigma(T) \setminus \{0\} = \{s_n\}_{n=1}^N.
			\]
			Each nonzero $s_n$ occurs only finitely many times in the sequence, so every nonzero spectral value is an eigenvalue of finite multiplicity. In particular, $0$ is the only possible accumulation point of the nonzero spectrum.
			
			\item 
			The set $\{u_n\}_{n=1}^N$ can be extended by an orthonormal basis
			$\{v_\alpha\}$ of $N(T_s)=N(T)$ and an orthonormal basis
			$\{w_\beta\}$ of $T(0)$ so that
			\begin{itemize}
				\item $\{u_n\}_{n=1}^N\cup\{v_\alpha\}$ is an orthonormal basis of
				$T(0)^\perp$;
				\item $\{u_n\}_{n=1}^N\cup\{v_\alpha\}\cup\{w_\beta\}$ is an
				orthonormal basis of $H$.
			\end{itemize}
			Here $N(T_s)=N(T)$ because, for a self-adjoint relation,
			$D(T)\subset T(0)^\perp$ (Remark~\ref{rem:key-observation}).
			For $x\in D(T)=D(T_s)$, the condition $0\in T(x)$ is equivalent to
			$T_sx\in T(0)\cap T(0)^\perp=\{0\}$, and hence  $T_sx=0$.
			The vectors $u_n$ and $v_\alpha$ are eigenvectors of $T$,
			corresponding respectively to the eigenvalues $s_n$ and $0$.
			The vectors $w_\beta$ form an orthonormal basis of the multivalued
			part $T(0)$ and are not eigenvectors of $T$ in the usual sense.
		\end{enumerate}
	\end{thm}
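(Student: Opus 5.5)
The plan is to reduce everything to the classical spectral theorem for the operator part. Since $T$ is compact and self-adjoint, it is closed, and Proposition~\ref{arens} gives $T = T_s \oplus T_\infty$ with $T_\infty = \{0\}\times T(0)$. By Theorem~\ref{thm:spectrum-decomp}(iii), $T_s$ is self-adjoint on the Hilbert space $K := T(0)^\perp$. By Lemma~\ref{lem:compact-Ts}(i) it is compact. Remark~\ref{rem:compact-self-adjoint-operator-part} then shows that $T_s$ is an everywhere-defined bounded compact self-adjoint operator on $K$, so that $D(T) = D(T_s) = K$.

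Apply the classical spectral theorem to $T_s$ on $K$. It produces nonzero real eigenvalues $s_n$, ordered by decreasing modulus and repeated according to their finite multiplicities, with $s_n \to 0$ if there are infinitely many. It also produces an orthonormal sequence $\{u_n\}\subset K$ with $T_s u_n = s_n u_n$ and
\[
T_s x = \sum_n s_n\langle x,u_n\rangle u_n \quad \text{for } x \in K,
\]
where $\{u_n\}$ is an orthonormal basis of $\overline{R(T_s)}$. This yields (b) directly. For (a), note that $(u_n,s_nu_n)\in T_s\subset T$ and that $T(x) = T_s x + T(0)$ for $x\in D(T)$ by the decomposition. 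For (c), take $y\in T(x)$ and set $y_\infty := y - T_s x$. Then $(0,y_\infty) = (x,y)-(x,T_sx)\in T$, so $y_\infty\in T(0)$, and substituting the expansion of $T_s x$ gives (3.1).

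Part (d) follows from Theorem~\ref{thm:spectrum-decomp}(ii), which gives $\sigma(T)=\sigma(T_s)$. Here $\sigma(T_s)$ is computed in $K$, and the classical theorem gives $\sigma(T_s)\setminus\{0\}=\{s_n\}$, with each value of finite multiplicity. Theorem~\ref{thm:nullspace} shows that $N(T-s_n)=N(T_s-s_n)$, so the multiplicities agree.

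For (e), choose an orthonormal basis $\{v_\alpha\}$ of $N(T_s)$. Because $T_s$ is self-adjoint on $K$, we have $K = \overline{R(T_s)}\oplus N(T_s)$, so $\{u_n\}\cup\{v_\alpha\}$ is an orthonormal basis of $K$. Then choose an orthonormal basis $\{w_\beta\}$ of the closed subspace $T(0)$. Since $H = K\oplus T(0)$, the union of all three families is an orthonormal basis of $H$. The identity $N(T)=N(T_s)$ is obtained as indicated in the statement: $0\in T(x)$ forces $T_sx\in T(0)\cap K=\{0\}$.

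The main obstacle is the bookkeeping between spectra computed in $H$ versus in $K$. This is exactly what is needed in (d), and in particular at the point $0$, which for $T$ is not covered by the nonzero-spectrum claim. I would lean entirely on Theorem~\ref{thm:spectrum-decomp}(ii) for this, and on Remark~\ref{rem:compact-self-adjoint-operator-part} to guarantee that $D(T_s)$ is all of $K$.
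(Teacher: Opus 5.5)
Your proposal is correct and follows essentially the same route as the paper's proof. Both use the Arens decomposition, compactness of $T_s$ via Lemma~\ref{lem:compact-Ts}(i), the classical spectral theorem for $T_s$ on $T(0)^\perp$, and transfer back to $T$ through $\sigma(T)=\sigma(T_s)$ and $N(T)=N(T_s)$. You also make two small points explicit that the paper's proof leaves implicit: you invoke Remark~\ref{rem:compact-self-adjoint-operator-part} to get $D(T_s)=T(0)^\perp$, which (b) needs in order to hold on all of $T(0)^\perp$, and Theorem~\ref{thm:nullspace} to match multiplicities.
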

	
	\begin{proof}
		Let $H_s = T(0)^\perp$. Since $T$ is self-adjoint, $T$ is closed. By Theorem~\ref{thm:spectrum-decomp}(iii), $T_s$ is a self-adjoint operator in $H_s$, and $T = T_s \oplus T_\infty$ with $T_\infty = \{0\} \times T(0)$. By Lemma~\ref{lem:compact-Ts}(i), $T_s$ is compact.
		
		The operator $T_s$ is allowed to be the zero operator; in that case the sequence of nonzero eigenvalues is empty, that is, $N=0$, and the conclusions are interpreted accordingly.

		Now we prove part (a) and (b).
		Apply the classical spectral theorem for compact self-adjoint operators to the operator $T_s$ on $H_s$. There exists a finite or infinite sequence $\{s_n\}_{n=1}^N$ of nonzero real numbers, where $N\in\mathbb N_0\cup\{\infty\}$, such that, when $N\ge1$, $|s_1|\ge|s_2|\ge\cdots>0$, together with a corresponding orthonormal set $\{u_n\}_{n=1}^N \subset H_s$, such that:
		\begin{enumerate}
			\item[(i)] $T_s(u_n) = s_n u_n$ for all $n$,
			\item[(ii)] for every $x \in D(T_s)$, $T_s(x) = \sum_{n=1}^N s_n \langle x, u_n \rangle u_n$,
			\item[(iii)] $\{u_n\}_{n=1}^N$ is an orthonormal basis for $\overline{R(T_s)} = N(T_s)^\perp$ in $H_s$,
			\item[(iv)] $\sigma(T_s) \setminus \{0\} = \{s_n\}_{n=1}^N$, each $s_n$ has finite multiplicity, and if $N = \infty$, then $s_n \to 0$.
			
		\end{enumerate}
		
		Since $T_s \subset T$, property (i) gives $(u_n, s_n u_n) \in T$, which means $T(u_n) = s_n u_n + T(0)$. Thus $u_n$ is an eigenvector of $T$ with eigenvalue $s_n$. This proves part (a). The orthogonality follows from the self-adjointness of $T_s$. Also property (ii) proves part (b).

To prove  part (c), consider $x \in D(T) = D(T_s)$ and $y \in T(x)$. Since $T = T_s \oplus T_\infty$, we have $y \in T_s(x) + T(0)$. Thus there exists $y_\infty \in T(0)$ with $y = T_s(x) + y_\infty$. By part (b), this gives (3.1).
		
	Proof of part (d) is as follows.
		By Theorem~\ref{thm:spectrum-decomp}(ii), $\sigma(T)=\sigma(T_s)$. Property (iv) of the classical compact self-adjoint spectral theorem gives
		\[
		\sigma(T_s)\setminus\{0\}=\{s_n\}_{n=1}^N,
		\]
		with each nonzero spectral value having finite multiplicity and with $0$ as the only possible accumulation point. Therefore
		\[
		\sigma(T)\setminus\{0\}=\{s_n\}_{n=1}^N.
		\]
		
	Now we prove part (e). From the classical spectral theorem, $\{u_n\}_{n=1}^N$ is an orthonormal basis for $\overline{R(T_s)} = N(T_s)^\perp$ in $H_s$. Let $\{v_\alpha\}$ be an orthonormal basis of $N(T_s)$. Then $\{u_n\}_{n=1}^N \cup \{v_\alpha\}$ is an orthonormal basis of $H_s = T(0)^\perp$.
		
		We verify that $N(T_s) = N(T)$. For $x \in D(T) = D(T_s)$, the condition $0 \in T(x)$ means $T_s(x) \in T(0)$ (since $T(x) = T_s(x) + T(0)$). But $T_s(x) \in T(0)^\perp$, so $T_s(x) \in T(0) \cap T(0)^\perp = \{0\}$, hence $T_s(x) = 0$. Thus $x \in N(T_s)$. Conversely, if $x \in N(T_s)$, then $(x, 0) \in T_s \subset T$, so $0 \in T(x)$, giving $x \in N(T)$. Therefore $N(T_s) = N(T)$.
		
		Each $v_\alpha \in N(T_s) = N(T)$ satisfies $(v_\alpha, 0) \in T_s \subset T$, so $v_\alpha$ is an eigenvector of $T$ with eigenvalue $0$. Each $u_n$ is an eigenvector of $T$ with eigenvalue $s_n$ as shown previously.
		
		Finally, let $\{w_\beta\}$ be an orthonormal basis of $T(0)$. Since $H = T(0)^\perp \oplus T(0)$, the union $\{u_n\} \cup \{v_\alpha\} \cup \{w_\beta\}$ is an orthonormal basis of $H$. The vectors $u_n$ and $v_\alpha$ are eigenvectors of $T$. The vectors $w_\beta$ form an orthonormal basis of the multivalued part $T(0)$ and are not eigenvectors of $T$ in the usual sense.
	\end{proof}

	\begin{rem}
		The proof shows that the spectral theorem for compact self-adjoint linear relations is a direct consequence of the Arens decomposition and the classical operator theorem. The multivalued part $T(0)$ contributes only the $y_\infty$ term in the representation (3.1) and the additional multivalued directions $\{w_\beta\}$. No finite-dimensionality condition on $T(0)$ is required anywhere in the proof; the only requirement is that $T_s$ be a compact self-adjoint operator, which follows automatically from the compactness and self-adjointness of $T$.
	\end{rem}
	
	We now present examples illustrating the spectral theorem.
	
	\begin{exam}\label{ex:spectral-shift}
		Consider the relation $F$ from Example~\ref{ex:shift}. Its adjoint is
		\[
		F^*(x_1, x_2, x_3, \dots) = (x_2, \tfrac{x_3}{2}, \tfrac{x_4}{3}, \dots), \qquad D(F^*) = e_1^\perp.
		\]
		Let $T = FF^*$. Then $T \in LR(\ell^2)$ is self-adjoint, with $D(T) = e_1^\perp$,
		\[
		T(x_1, x_2, x_3, \dots) = \{(c, x_2, \tfrac{x_3}{2^2}, \tfrac{x_4}{3^2}, \dots) \mid c \in \mathbb{C}\}.
		\]
		Here $T(0) = \operatorname{span}\{e_1\}$ and $D(T) = e_1^\perp = T(0)^\perp$. The operator part is
		\[
		T_s(x_1, x_2, x_3, \dots) = (0, x_2, \tfrac{x_3}{2^2}, \tfrac{x_4}{3^2}, \dots),
		\]
		acting on $T(0)^\perp = e_1^\perp$. The eigenvalues are $\{1, \frac{1}{2^2}, \frac{1}{3^2}, \dots\}$ with eigenvectors $e_2, e_3, e_4, \dots$. For $x \in D(T)$ and $y \in T(x)$, the spectral representation is
		\[
		y = \sum_{n=2}^\infty \frac{1}{(n-1)^2} \langle x, e_n \rangle e_n + c\,e_1,
		\]
		for a single arbitrary constant $c \in \mathbb{C}$.
	\end{exam}
	
	\begin{exam}\label{ex:infinite-T0-nontrivial}
		Let $H = \ell^2(\mathbb{N}) \oplus L^2[0,1]$. Define $T(0) = \ell^2(\mathbb{N}) \oplus \{0\}$, an infinite-dimensional closed subspace with $T(0)^\perp = \{0\} \oplus L^2[0,1] \cong L^2[0,1]$.
		
		Let $K: L^2[0,1] \to L^2[0,1]$ be the rank-one self-adjoint integral operator
		\[
		(Kf)(x) = \int_0^1 \sin(\pi x) \sin(\pi y) \, f(y) \, dy,
		\]
		with eigenvalue $\lambda = 1/2$ and normalized eigenvector $u(x) = \sqrt{2}\sin(\pi x)$.
		
		Define $T \in LR(H)$ with $D(T) = T(0)^\perp = \{0\} \oplus L^2[0,1]$ by
		\[
		T((0, f)) = T(0) + (0, Kf) = \{(b, Kf) \mid b \in \ell^2(\mathbb{N})\}, \qquad (0, f) \in D(T).
		\]
		Here $D(T) = T(0)^\perp$, consistent with the necessary condition $D(T) \subset T(0)^\perp$ for self-adjointness (Remark~\ref{rem:key-observation}). The relation $T$ is self-adjoint since $T_s = K$ is self-adjoint on $T(0)^\perp$ and $T_\infty$ is self-adjoint on $T(0)$, and their orthogonal direct sum is self-adjoint by Lemma~\ref{lem:converse-self-adjoint}.
		
		Then $T$ is self-adjoint and compact. Applying Theorem~\ref{thm:spectral}: $s_1 = 1/2$, $u_1 = (0, u)$. For $x = (0, f) \in D(T)$ and $y = (b, Kf) \in T(x)$,
		\[
		y = \tfrac{1}{2} \langle f, u \rangle_{L^2} \, (0, u) + (b, 0),
		\]
		where $(b, 0) \in T(0)$. This demonstrates that the spectral theorem applies even when $Dim ~T(0) = \infty$.
	\end{exam}
	\section{Application: Eigenfunction expansion for the Neumann Laplacian}
	
	In this section we apply the abstract spectral theorem (Theorem~\ref{thm:spectral}) to a classical elliptic boundary value problem. The Neumann Laplacian on a bounded domain has a one-dimensional kernel consisting of constant functions, so its inverse is not a single-valued operator. Using the standard quasi-boundary-triple framework to identify the Neumann realization, we then identify its inverse as a compact self-adjoint linear relation and obtain the well-known eigenfunction expansion with an arbitrary additive constant.
	
	\subsection{Quasi-boundary triples and self-adjoint extensions}
	
	We briefly recall the framework needed for the Laplacian, following Derkach and Malamud~\cite{DerkachMalamud}. For elliptic differential operators on smooth bounded domains, the natural trace maps are generally not an ordinary boundary triple on the full maximal domain. A convenient rigorous framework is provided by a \emph{quasi-boundary triple} introduced in \cite{quasiB}. In the present application, the boundary maps are defined on a suitable intermediate domain containing the minimal domain, and the associated Green identity and extension parametrisation are used in the standard way.
	
	Let $A_0$ be a densely defined closed symmetric operator in a Hilbert space $H$, and let $T$ be an operator satisfying
	\[
	A_0\subset T\subset A_0^*.
	\]
	A triple $(\mathcal K,\Gamma_0,\Gamma_1)$ is a quasi-boundary triple for $T$ if $\mathcal K$ is a Hilbert space, $\Gamma_0,\Gamma_1:{D}(T)\to\mathcal K$ are linear maps such that
	\begin{enumerate}[label=(\roman*)]
		\item the map
		\[
		{D}(T)\ni x\longmapsto(\Gamma_0x,\Gamma_1x)\in\mathcal K\times\mathcal K
		\]
		has dense range;
		\item the abstract Green identity
		\[
		\langle Tx,y\rangle-\langle x,Ty\rangle
		=\langle\Gamma_0x,\Gamma_1y\rangle_{\mathcal K}
		-\langle\Gamma_1x,\Gamma_0y\rangle_{\mathcal K}
		\]
		holds for all $x,y\in D(T)$;
		\item the operator
		\[
		A_{\mathrm{ref}}:=T|_{\ker\Gamma_0}
		\]
		is self-adjoint in $H$.
	\end{enumerate}
	
	For the Laplacian on a smooth bounded domain, one may take the intermediate domain to be $H^2(\Omega)$ and use the trace maps
	\[
	\Gamma_0u=u|_{\partial\Omega},\qquad
	\Gamma_1u=\partial_\nu u|_{\partial\Omega},
	\]
	with the usual Sobolev-space interpretation. Green's second identity gives the required abstract Green identity. The Neumann realization corresponds to
	\[
	A_N=-\Delta\big|_{\ker\Gamma_1},
	\]
	with domain
	\[
	D(A_N)=\{u\in H^2(\Omega):\partial_\nu u=0\text{ on }\partial\Omega\}.
	\]
	Thus the only role of the boundary-triple framework in this paper is to identify the Neumann realization as a self-adjoint extension; the subsequent compactness and spectral arguments are carried out directly for this operator and its inverse relation.
	
	\subsection{The Neumann Laplacian and its inverse as a linear relation}
	
	Let $\Omega \subset \mathbb{R}^n$ be a bounded domain with smooth boundary $\partial\Omega$. Consider the minimal symmetric operator associated with the Laplacian:
	\begin{equation}\label{eq:A0}
		A_0 u = -\Delta u, \qquad
		D(A_0) = H^2_0(\Omega) = \{u \in H^2(\Omega) \mid u|_{\partial\Omega} = 0 \text{ and } \partial_\nu u|_{\partial\Omega} = 0\},
	\end{equation}
	where $\partial_\nu$ denotes the outward normal derivative. The space $H^2_0(\Omega)$ is the closure of $C_c^\infty(\Omega)$ in $H^2(\Omega)$; equivalently, it consists of $H^2$ functions with vanishing zeroth and first order traces on $\partial\Omega$. $A_0$ is a closed symmetric operator with infinite deficiency indices. Its adjoint is the maximal operator
    \begin{center}
         $	A_0^* u = -\Delta u,$
    \end{center}
	\begin{equation}\label{eq:A0star}
D(A_0^*) = \{u \in L^2(\Omega) \mid \Delta u \in L^2(\Omega) \text{ in the sense of distributions}\}.
	\end{equation}

	The Neumann realization identified above is the self-adjoint operator
	\begin{equation}\label{eq:Neumann}
		A=A_N=-\Delta,\qquad
		D(A)=\{u\in H^2(\Omega):\partial_\nu u=0\text{ on }\partial\Omega\}.
	\end{equation}
	Because $\Omega$ is bounded and has smooth boundary, $A$ has compact resolvent: $(A + i)^{-1}$ is a compact operator on $L^2(\Omega)$. This follows from the Rellich--Kondrachov theorem, which gives the compactness of the embedding $H^2(\Omega) \hookrightarrow L^2(\Omega)$, together with the  estimate $\|u\|_{H^2(\Omega)} \le C \|(A+i)u\|_{L^2(\Omega)}$ for $u \in D(A)$. Consequently, the spectrum of $A$ consists of a sequence of eigenvalues
	\[
	0 = \lambda_0 < \lambda_1 \le \lambda_2 \le \dots \to \infty,
	\]
	each of finite multiplicity. The eigenspace for $\lambda_0 = 0$ is the space of constant functions,
	\[
	N(A) = \operatorname{span}\{\mathbf{1}\},
	\]
	where $\mathbf{1}$ denotes the constant function with value $1$. Indeed, if $-\Delta u = 0$ with $\partial_\nu u = 0$, then Green's identity gives
	\[
	\int_\Omega |\nabla u|^2 \, dx = \langle -\Delta u,u\rangle_{L^2(\Omega)} + \int_{\partial\Omega} \partial_\nu u \, \overline{u} \, dS = 0,
	\]
	so $\nabla u = 0$ and $u$ is constant.
	
	Since $0$ is an eigenvalue, $A$ is not invertible as an operator. However, its inverse can be defined as a linear relation. Set
	\begin{equation}\label{eq:inverse}
		T = A^{-1} = \bigl\{ (f, u) \in L^2(\Omega) \times L^2(\Omega) \mid u \in D(A),\; -\Delta u = f,\; \partial_\nu u = 0 \text{ on } \partial\Omega \bigr\}.
	\end{equation}
	This means $(f, u) \in T$ if and only if $(u, f) \in A$, that is, $u$ is a solution of the Neumann problem with right-hand side $f$.
	
	For $f \in R(A) = N(A)^\perp = \{f \in L^2(\Omega) \mid \int_\Omega f \, dx = 0\}$, the equation $-\Delta u = f$ with $\partial_\nu u = 0$ has a unique solution $u_0$ orthogonal to the constants; the full set of solutions is $u_0 + \operatorname{span}\{\mathbf{1}\}$. The uniqueness of $u_0$ follows from the fact that the difference of two solutions is a harmonic function with $\partial_\nu u = 0$, hence a constant; requiring orthogonality to constants gives uniqueness. Existence follows from the Lax--Milgram theorem applied to the weak formulation on the Hilbert space of zero-mean $H^1$ functions. Thus $T$ is a linear relation with
	\[
	D(T) = R(A) = \Bigl\{f \in L^2(\Omega) \mid \int_\Omega f \, dx = 0\Bigr\}, \qquad
	T(0) = N(A) = \operatorname{span}\{\mathbf{1}\}.
	\]
	Note that $D(T) = R(A) = N(A)^\perp = T(0)^\perp$, which is consistent with the general inclusion $D(T) \subseteq T(0)^\perp$ for self-adjoint linear relations (Remark~\ref{rem:key-observation}).
	
	\begin{lem}\label{lem:properties}
		The linear relation $T = A^{-1}$ is self-adjoint, compact, and $Dim T(0) = 1$.
	\end{lem}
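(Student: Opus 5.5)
The three claims are proved in the order: multivalued part, self-adjointness, compactness, with the last reduced via Lemma~\ref{lem:compact-Ts} to a statement about the operator part.

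\textbf{Multivalued part.} From \eqref{eq:inverse}, $T(0)=\{u : (u,0)\in A\}=N(A)=\operatorname{span}\{\mathbf{1}\}$. This was established above by Green's identity, so $Dim\, T(0)=1$ at once.

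\textbf{Self-adjointness.} I use the general fact that the adjoint of a linear relation commutes with inversion: $(T^{-1})^*=(T^*)^{-1}$. To see it, observe that $(f,g)\in (A^{-1})^*$ means $\langle g,u\rangle=\langle f,v\rangle$ for all $(u,v)\in A^{-1}$, i.e.\ for all $(v,u)\in A$. This is exactly the condition $(g,f)\in A^*$. Since $A=A_N$ is self-adjoint, with self-adjointness taken from the (quasi-)boundary triple framework of Section~4.1 as $A_{\mathrm{ref}}$-type realization $-\Delta|_{\ker\Gamma_1}$, we get $T^*=(A^*)^{-1}=A^{-1}=T$. As an independent check I would also use Lemma~\ref{lem:converse-self-adjoint} with $V=\operatorname{span}\{\mathbf{1}\}$. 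Here $S$ is the map $f\mapsto u_0$ on $V^\perp$, where $u_0$ is the unique zero-mean solution. One verifies that $S$ is bounded, symmetric (by Green's identity $\langle Sf,g\rangle=\langle Sf,ASg\rangle=\langle ASf,Sg\rangle=\langle f,Sg\rangle$) and everywhere defined on $V^\perp$, hence self-adjoint. Then $T=\{(f,Sf+c\mathbf{1})\}$, since all solutions are $u_0+c\mathbf{1}$, and Lemma~\ref{lem:converse-self-adjoint} gives self-adjointness.

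\textbf{Compactness.} By Lemma~\ref{lem:compact-Ts}(ii) it would suffice to show that $T_s=S$ is compact on $T(0)^\perp$. There is a subtlety here, though: Lemma~\ref{lem:compact-Ts}(ii) as stated assumes $T$ bounded with $D(T)=H$, which fails because $D(T)=\{\mathbf 1\}^\perp$. So I would instead verify Definition~\ref{def:compact} directly. Take a bounded sequence $(f_n)$ in $D(T)$. We have $QT(f_n)=Sf_n+T(0)$, and the map $z\mapsto z+T(0)$ is an isometry from $T(0)^\perp$ to $H/T(0)$. Hence it suffices to extract a convergent subsequence of $(Sf_n)$.

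\textbf{Compactness of $S$ (main obstacle).} For compactness of $S$, I would use the resolvent identity. For $f\perp\mathbf 1$ we have $Sf\perp \mathbf 1$, and $(A+i)Sf=f+iSf$, so
\[
Sf=(A+i)^{-1}(f+iSf).
\]
Since $(A+i)^{-1}$ is compact, compactness of $S$ follows once $S$ is shown to be bounded.

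The boundedness of $S$ is where the real work lies. It follows from the spectral gap: on $\{\mathbf 1\}^\perp$ we have $A\ge\lambda_1>0$. Concretely, with $u=Sf$, Green's identity gives
\[
\lambda_1\|u\|^2\le\|\nabla u\|^2=\langle f,u\rangle\le\|f\|\|u\|,
\]
which uses the Poincar\'e--Wirtinger inequality for zero-mean functions. Hence $\|Sf\|\le \lambda_1^{-1}\|f\|$.

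Alternatively, one can expand in the Neumann eigenbasis $\{\phi_k\}$ of the compact-resolvent operator $A$, which gives $Sf=\sum_{k\ge1}\lambda_k^{-1}\langle f,\phi_k\rangle\phi_k$ with $\lambda_k^{-1}\to0$; this is a norm limit of finite-rank operators and hence compact. Either route completes the proof that $T$ is compact.
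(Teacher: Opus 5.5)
Your proof is correct. The dimension count and the self-adjointness argument match the paper's, which writes $T^*=(A^{-1})^*=(A^*)^{-1}=A^{-1}=T$. Your cross-check via Lemma~\ref{lem:converse-self-adjoint} is a useful addition: it gets self-adjointness of $T$ from three facts only (symmetry of $A$, solvability on $\{\mathbf 1\}^\perp$, and boundedness of $S$), without relying on the boundary-triple identification of $A_N$. Incidentally, in the paper's triple $A_{\mathrm{ref}}$ is the Dirichlet realization on $\ker\Gamma_0$, not the Neumann one.

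For compactness the paper, like you, verifies Definition~\ref{def:compact} directly instead of using Lemma~\ref{lem:compact-Ts}(ii). Your reason for avoiding that lemma is accurate: $D(T)=\{\mathbf 1\}^\perp\neq H$, so $T$ is not bounded in the paper's sense.

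The two routes differ in how the convergent subsequence is obtained. The paper takes any $u_n\in T(f_n)$ and subtracts its mean. It then uses Green's identity and Poincaré to bound $\|u_n^0\|_{H^1(\Omega)}$, and applies Rellich--Kondrachov for $H^1(\Omega)\hookrightarrow L^2(\Omega)$. You use the same Green/Poincaré computation only to get the $L^2$ bound $\|S\|\le\lambda_1^{-1}$. You then import compactness from $(A+i)^{-1}$ through the identity $Sf=(A+i)^{-1}(f+iSf)$.

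Each route has its merit:
\begin{itemize}
\item The paper's route is more elementary: it needs only $H^1$-compactness, not the $H^2$ elliptic estimate behind the compact resolvent.
\item Yours is operator-theoretic and identifies $T_s=S$ and its norm explicitly.
\item Yours also carries over verbatim to any self-adjoint operator with compact resolvent, with $\operatorname{dist}(0,\sigma(A)\setminus\{0\})>0$ replacing Poincaré. That is precisely the setting of Remark~\ref{rem:generalization}.
\end{itemize}

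Of your two closing arguments, prefer the resolvent identity. The expansion $Sf=\sum_k\lambda_k^{-1}\langle f,\phi_k\rangle\phi_k$ is logically valid, but it presupposes the Neumann eigenfunction expansion that Section~4.3 is meant to recover from Theorem~\ref{thm:spectral}.
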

	
	\begin{proof}
The inverse of a self-adjoint linear relation is self-adjoint \cite{21}. Explicitly, $T^* = (A^{-1})^* = (A^*)^{-1} = A^{-1} = T$, and hence the relation $T = A^{-1}$ is self-adjoint.
		
To prove the compactness of a linear relation $T$, we have to show the compactness of the induced operator $QT$, where $Q: L^2(\Omega) \to L^2(\Omega)/\overline{T(0)}$ is the quotient map. Since $T(0) = \operatorname{span}\{\mathbf 1\}$ is already closed, $\overline{T(0)} = T(0)$. Let $(f_n)$ be a bounded sequence in $D(T) = R(A)$. For each $n$, choose $u_n \in T(f_n)$; then $-\Delta u_n = f_n$ and $\partial_\nu u_n = 0$. Decompose $u_n = u_n^0 + c_n \mathbf{1}$, where
		\[
		u_n^0 = u_n - \frac{1}{|\Omega|} \int_\Omega u_n \, dx
		\]
		has zero mean and $c_n = \frac{1}{|\Omega|} \int_\Omega u_n \, dx$. Since $Q$ annihilates $\operatorname{span}\{\mathbf 1\}$, we have $QT(f_n) = Q(u_n) = Q(u_n^0)$, so it suffices to produce an $L^2$-convergent subsequence of $(u_n^0)$; the constants $c_n$ play no role in the quotient.
		
		We show $(u_n^0)$ is bounded in $H^1(\Omega)$. Since $f_n \in R(A) = N(A)^\perp$, we have $\int_\Omega f_n \, dx = 0$, so $\langle f_n, \mathbf{1} \rangle = 0$. Using Green's identity and the Neumann boundary condition:
		\[
		\int_\Omega |\nabla u_n|^2 \, dx = \int_\Omega (-\Delta u_n) u_n \, dx = \langle f_n, u_n \rangle = \langle f_n, u_n^0 \rangle + c_n \langle f_n, \mathbf{1} \rangle = \langle f_n, u_n^0 \rangle.
		\]
		By the Cauchy--Schwarz inequality and the Poincar\'e inequality for zero-mean functions ($\|u_n^0\|_{L^2} \le C_P \|\nabla u_n^0\|_{L^2} = C_P \|\nabla u_n\|_{L^2}$), we get
		\[
		\|\nabla u_n\|_{L^2}^2 \le \|f_n\|_{L^2} \|u_n^0\|_{L^2} \le C_P \|f_n\|_{L^2} \|\nabla u_n\|_{L^2}.
		\]
		Since $(f_n)$ is bounded, this gives $\|\nabla u_n\|_{L^2} \le C_P \sup_n \|f_n\|_{L^2} < \infty$. By the Poincar\'e inequality again,
		\[
		\|u_n^0\|_{H^1(\Omega)}^2 = \|u_n^0\|_{L^2}^2 + \|\nabla u_n\|_{L^2}^2 \le (1+C_P^2)\|\nabla u_n\|_{L^2}^2 \le C,
		\]
		so $(u_n^0)$ is bounded in $H^1(\Omega)$.
		
		By the Rellich--Kondrachov theorem, the embedding $H^1(\Omega) \hookrightarrow L^2(\Omega)$ is compact, so there exists a subsequence $(u_{n_k}^0)$ converging in $L^2(\Omega)$ to some $u_*^0 \in L^2(\Omega)$. Consequently $Q(u_{n_k}^0) \to Q(u_*^0)$ in $L^2(\Omega)/T(0)$, that is $QT(f_{n_k}) = QT$ applied to the subsequence converges. Thus $QT$ is compact, and $T$ is compact.
		
		\emph{Dimension:} Clearly $Dim~ T(0) = Dim~ N(A) = 1$, since $N(A) = \operatorname{span}\{\mathbf{1}\}$.
	\end{proof}
	
	Thus $T$ satisfies all hypotheses of Theorem~\ref{thm:spectral}: $T$ is compact, self-adjoint, $Dim~ T(0) = 1$, and $D(T) \neq \{0\}$ (since $R(A)$ contains nonzero functions).
	
	\subsection{Eigenfunction expansion for the Neumann problem}
	
	Applying Theorem~\ref{thm:spectral} to $T = A^{-1}$, we obtain a sequence of nonzero real numbers $\{s_k\}_{k=1}^\infty$ with $|s_1| \ge |s_2| \ge \cdots > 0$, $s_k \to 0$, and an orthonormal set $\{u_k\}_{k=1}^\infty \subset T(0)^\perp \subset L^2(\Omega)$ such that
	\[
	(u_k, s_k u_k) \in T, \quad \text{that is, } T(u_k) = s_k u_k + T(0) = s_k u_k + \operatorname{span}\{\mathbf{1}\}, \qquad k = 1, 2, \dots
	\]
	By Theorem~\ref{thm:spectral}(e), the functions $u_k$ are orthogonal to $N(T) = T(0) = \operatorname{span}\{\mathbf{1}\}$, so $\int_\Omega u_k = 0$.
	
	We now identify the $u_k$ as the Neumann eigenfunctions. Since $u_k \in D(T) = R(A)$, we have $\int_\Omega u_k = 0$ automatically. The condition $T(u_k) = s_k u_k + \operatorname{span}\{\mathbf{1}\}$ means that for any $v \in T(u_k)$, there exists a constant $c \in \mathbb{C}$ such that $v = s_k u_k + c \mathbf{1}$. In particular, $s_k u_k \in T(u_k)$. By definition of $T = A^{-1}$, this means $(s_k u_k, u_k) \in A$, that is,
	\[
	s_k u_k \in D(A), \qquad -\Delta(s_k u_k) = u_k, \qquad \partial_\nu(s_k u_k) = 0.
	\]
	Since $s_k \neq 0$, this gives
	\[
	u_k \in D(A), \qquad -s_k \Delta u_k = u_k, \qquad \partial_\nu u_k = 0.
	\]
	Dividing by $s_k$, we obtain the eigenvalue equation
	\[
	-\Delta u_k = \lambda_k u_k, \qquad \lambda_k := s_k^{-1} > 0, \qquad \partial_\nu u_k = 0.
	\]
	Note that no additional constant term appears because $\Delta \mathbf{1} = 0$ and $\partial_\nu \mathbf{1} = 0$; the multivalued part of $T$ affects the choice of representative in $T(u_k)$ but not the eigenvalue equation itself.
	
	Thus $\{u_k\}_{k=1}^\infty$ are precisely the non-constant Neumann eigenfunctions of $A$ (normalized in $L^2(\Omega)$), and $\{\lambda_k\}_{k=1}^\infty$ are the corresponding positive eigenvalues. The spectral theorem guarantees that the $u_k$ form an orthonormal set, are orthogonal to the constants, and that every positive Neumann eigenvalue appears in this sequence with the correct multiplicity. Since the Neumann spectrum consists exactly of $0$ and the positive eigenvalues, and the spectral theorem gives $\sigma(T) \setminus \{0\} = \{s_k\}_{k=1}^\infty$, we recover the full spectral structure.
	
	Now take any $f \in D(T) = R(A)$ (that is, $\int_\Omega f \, dx = 0$) and any solution $u \in T(f)$ of the Neumann problem
	\[
	-\Delta u = f \quad \text{in } \Omega, \qquad \partial_\nu u = 0 \quad \text{on } \partial\Omega.
	\]
	By the expansion formula in Theorem~\ref{thm:spectral}(c), there exists $w \in T(0) = \operatorname{span}\{\mathbf{1}\}$ such that
	\[
	u = \sum_{k=1}^\infty s_k \langle f, u_k \rangle u_k + w.
	\]
	Substituting $s_k = \lambda_k^{-1}$ and writing $w = C \mathbf{1}$ for some constant $C \in \mathbb{C}$ yields the classical eigenfunction expansion
	\begin{equation}\label{eq:eigen-expansion}
		u(x) = \sum_{k=1}^\infty \frac{\langle f, u_k \rangle}{\lambda_k} \, u_k(x) \;+\; C,
	\end{equation}
	where $C$ is an arbitrary constant. The series converges in $L^2(\Omega)$ by Theorem~\ref{thm:spectral}. Moreover, the partial sums converge in the graph norm of the Neumann operator $A$: applying $A$ to the partial sums gives the corresponding Fourier partial sums of $f$, which converge in $L^2(\Omega)$. By the elliptic estimate for the Neumann realization, the graph norm controls the $H^2(\Omega)$-norm, so the series also converges in $H^2(\Omega)$ under the stated smoothness assumptions.
	
	This is exactly the expansion provided by the classical spectral theorem for the compact self-adjoint operator $T_s$ (the inverse of $A$ restricted to the orthogonal complement of the constants), but here it is obtained directly from the abstract result for compact self-adjoint linear relations, without first projecting onto the orthogonal complement of the constants. The multivaluedness of $T$ elegantly captures the arbitrary additive constant in the solution, which is the hallmark of the Neumann problem: solutions are only determined up to an additive constant.
	
	\begin{rem}\label{rem:neumann-expansion-sign}
		The eigenvalues $\lambda_k$ defined here satisfy $-\Delta u_k = \lambda_k u_k$ with $\lambda_k > 0$. The classical Neumann eigenvalues are nonnegative, with $\lambda_0 = 0$ corresponding to the constant eigenfunction. Our $\lambda_k$ for $k \ge 1$ are precisely the positive Neumann eigenvalues.
	\end{rem}
	
	\begin{rem}\label{rem:generalization}
		The same technique applies to any self-adjoint elliptic operator with compact resolvent that has a finite-dimensional kernel. For example, the Robin problem
		\[
		\partial_\nu u + \alpha u = 0 \quad \text{on } \partial\Omega, \qquad \alpha \in \mathbb{R},
		\]
		has a non-trivial kernel precisely when $-\alpha$ is an eigenvalue of the associated Steklov problem
		\[
		\Delta v = 0 \text{ in } \Omega, \qquad \partial_\nu v = \sigma v \text{ on } \partial\Omega;
		\]
		since the Steklov eigenvalues $\sigma$ satisfy $\sigma \ge 0$ (with $\sigma = 0$ corresponding to the constants), a non-trivial kernel of the Robin realization can only occur for $\alpha \le 0$, and in that case $Dim~ N(A)$ equals the multiplicity of $-\alpha$ as a Steklov eigenvalue. More generally, the theory of boundary triples allows one to prescribe boundary conditions that are themselves linear relations (set-valued), leading to multi-valued self-adjoint realisations of elliptic operators. The inverses of such realisations are again compact self-adjoint linear relations, and under the same type of compactness argument as in Lemma~\ref{lem:properties}, Theorem~\ref{thm:spectral} applies to give a unified framework for eigenfunction expansions in elliptic boundary value problems with set-valued boundary conditions.
	\end{rem}

\section{Concluding Remarks}

We have proved a spectral theorem for compact self-adjoint linear relations
on a Hilbert space without imposing any finite-dimensionality condition on the
multivalued part \(T(0)\). The proof is a reduction: the Arens decomposition
\(T=T_s\oplus T_\infty\) separates the single-valued part \(T_s\), a compact
self-adjoint operator on \(T(0)^\perp\), to which the classical spectral
theorem applies, while \(T(0)\) contributes only the additive ambiguity in
the representation
\(y=\sum_n s_n\langle x,u_n\rangle u_n + y_\infty\).
The novelty here is not the spectral theorem itself, which is classical, but
the observation that it extends verbatim to compact self-adjoint linear
relations once the Arens decomposition is applied, and in particular that the
finite-dimensionality condition on \(T(0)\), often imposed in the literature,
is unnecessary.

The key technical point is that for a self-adjoint \(T\), the domain satisfies
\(D(T)\subset T(0)^\perp\), so the operator part \(T_s\) carries all the
single-valued information of \(T\) and the multivalued part \(T_\infty =
\{0\}\times T(0)\) contributes only the additive ambiguity.

The same reduction yields a finite-rank approximation theorem for compact
bounded closed linear relations, and a Fredholm alternative for compact
self-adjoint linear relations; in both cases the multivalued part plays no
obstruction. The examples show that the Fredholm alternative genuinely requires
self-adjointness: for compact non-self-adjoint linear relations, the
dimensions of \(N(I-T)\) and \(N(I-T^*)\) may differ, so the operator-theoretic
statement does not carry over without modification.

The application to the Neumann Laplacian is an illustration rather than a new
PDE result. The classical eigenfunction expansion is recovered by observing
that the inverse of the Neumann Laplacian is a compact self-adjoint linear
relation whose multivalued part is the span of the constants, and the abstract
theorem then produces the expansion with the arbitrary additive constant
directly. The quasi-boundary-triple framework is used only to identify the
Neumann realization as a self-adjoint extension of the minimal Laplacian.

Natural directions for further work are the following.
\begin{itemize}
	\item Extending the spectral theorem to normal compact linear relations,
	where a suitable operator part must be defined, possibly via a polar
	decomposition in the multivalued setting.
	\item Developing perturbation and approximation results for linear relations,
	building on the finite-rank approximation theorem proved here.
	\item Applying the framework to elliptic operators on manifolds or to
	higher-order operators with finite-dimensional kernel.
	\item Treating unbounded self-adjoint linear relations and their functional
	calculus.
	\item Studying relations that arise from set-valued boundary conditions, which
	appear naturally in problems with impedance or homogenised boundary data.
\end{itemize}
	\subsection*{Acknowledgment}
	The authors wish to thank Guixin Xu, Bisna Mary Eldo, and Tathagata Banerjee for their valuable suggestions and comments.
	\subsection*{Disclosure statement}
	No potential conflict of interest was reported by the authors.
\printbibliography
\end{document}